\newcommand{\beq}{\begin{equation}}
\newcommand{\eeq}{\end{equation}}
\newcommand{\bea}{\begin{eqnarray*}}
\newcommand{\eea}{\end{eqnarray*}}
\newcommand{\be}{\begin{equation}}
\newcommand{\ee}{\end{equation}}
\newcommand{\bx}{\begin{bmatrix}}
\newcommand{\ex}{\end{bmatrix}}
\def\eps{\varepsilon}
\theoremstyle{definition}
\newtheorem*{definition}{Definition}
\theoremstyle{theorem}
\newtheorem{theorem}{Theorem}[subsection]
\newtheorem{conjecture}[theorem]{Conjecture}
\newtheorem{corollary}[theorem]{Corollary}
\newtheorem{proposition}[theorem]{Proposition}
 \newenvironment{cem}
{
    \begin{enumerate}
        \setlength{\topsep}{0pt}
        \setlength{\parskip}{0pt}
        \setlength{\partopsep}{0pt}
        \setlength{\parsep}{0pt}         
        \setlength{\itemsep}{0pt} 
}
{
    \end{enumerate} 
}
\def\cF{\mathcal{F}}
\def\cU{\mathcal{U}}
\begin{document}

\title{Isomorph-free generation of 2-connected graphs with applications} %\\%
%	\large(Draft -- Please do not distribute)}
\author{	Derrick Stolee\thanks{ %
	The author is supported in part by %
	the National Science Foundation grants %
	CCF-0916525 and DMS-0914815.% 
	}\\ 
	Department of Mathematics\\
	Department of Computer Science\\
	University of Nebraska--Lincoln\\
	 \texttt{s-dstolee1@math.unl.edu} 
}

\maketitle

\begin{abstract}
	Many interesting graph families contain only 2-connected graphs,
		which have ear decompositions.
	We develop a technique to generate families of unlabeled 2-connected graphs 
		using ear augmentations
		and apply this technique to two problems.
	In the first application, we search for uniquely $K_r$-saturated graphs
		and find the list of uniquely $K_4$-saturated graphs
		on at most $12$ vertices, supporting current 
		conjectures for this problem.
	In the second application, we verifying the Edge Reconstruction Conjecture
			for all 2-connected graphs on at most $12$ vertices.
	This technique can be easily extended
		to more problems concerning 2-connected graphs.
\end{abstract}

\def\arbitrarypagebreak{}

\section{Introduction}

If a connected graph $G$ has a vertex $x$ so that $G - x$ is 
	disconnected or a single vertex, then $G$ is \emph{separable}.
Otherwise, $G$ is \emph{$2$-connected}, 
	and there is no single vertex whose removal
	disconnects the graph.
Many interesting graph families contain only 2-connected graphs,
	so we devise a generation technique that exploits the structure of 2-connected
	graphs.
%%%%%%%%%%%%%%%%%%%%% DEFINITELY REVISE

A fundamental and well known property of 2-connected graphs
	is that they have an \emph{ear decomposition}.
An \emph{ear} is a path $x_0,x_1,\dots,x_k$
	so that $x_0$ and $x_k$ have degree at least three
	and $x_i$ has degree exactly two for all $i \in \{1,\dots,k-1\}$.
%In the case of a cycle, the entire graph is considered to be an ear.
An \emph{ear augmentation} on a graph $G$
	is the addition of a path 
	with at least one edge % redundant?
	between two vertices of $G$.
The augmentation process is also invertible: an \emph{ear deletion} 
	takes an ear $x_0,x_1,\dots,x_k$ in a graph
	and deletes all vertices $x_1,\dots,x_{k-1}$
	(or the edge $x_0x_1$ if $k = 1$).
Every 2-connected graph $G$ has a sequence of subgraphs 
	$G_1 \subset \cdots \subset G_\ell = G$
	so that 
	$G_1$ is a cycle
	and for all $i \in \{1,\dots,\ell-1\}$,
	$G_{i+1}$ is the result of an ear augmentation of $G_i$~\cite{dougwest}.

In Section \ref{sec:technique}, 
	we describe a method 
	for generating 2-connected graphs
	using ear augmentations.
While we wish to generate unlabeled graphs, 
	any computer implementation must store
	an explicit labeling of the graph.
Without explicitly controlling the number of times
	an isomorphism class appears, 
	a singe unlabeled graph may appear up to 
	$n!$ times.
An \emph{isomorph-free} generation scheme 
	for a class of combinatorial objects
	visits each isomorphism class exactly once.
To achieve this goal, our strategy will make explicit use
	of isomorphisms, automorphisms, and orbits.
The technique used in this work is an implementation of
	McKay's isomorph-free generation technique~\cite{McKayIsomorphFree},
	which is sometimes called ``canonical augmentation" or ``canonical deletion".
See~\cite{ClassificationAlgorithms} for a discussion of similar techniques.
We implement this technique 
	to generate only 2-connected graphs
	using ear augmentations.
	
Almost all graphs are 2-connected~\cite{almostallkconn},
	even for graphs with a small number of vertices\footnote{
		To see the overwhelming majority of 2-connected graphs, 
		compare the number of unlabeled graphs~\cite{OEISUnlabeled} 
		to the number of unlabeled 2-connected graphs~\cite{OEIS2connected}.
	},
	so as a method of generating all 2-connected graphs,
	this method cannot significantly reduce computation 
	compared to 
	generating all graphs and ignoring the separable graphs.
The strength of the method lies in its application
	to search over ear-monotone properties
	and to use the structure of the search to reduce computation.
These strengths are emphasized in two applications
	presented in this work.

In Section \ref{sec:saturation},
	we search for graphs that are uniquely $K_r$-saturated.
These graphs contain no $K_r$ and adding any edge from the complement
	creates a unique copy of $K_r$.
This pair of constraints 
	reduces the number of graphs that are visited while
	searching for uniquely $K_r$-saturated graphs.
The graphs found with this method support the current conjectures
	on these graphs.

In Section \ref{sec:reconstruction},
	we verify the Edge Reconstruction Conjecture on 
	small 2-connected graphs.
The structure of the search allows for a reduced number of pairwise comparisons 
	between edge decks.
Also, it is known that the Reconstruction Conjecture holds if
	all 2-connected graphs are reconstructible.
Since graphs with more than $1+\log(n!)$ edges are edge-reconstructible,
	we focus only on 2-connected graphs with at most this number of edges,
	providing a sparse set of graphs to examine.
This verifies the conjecture on all 2-connected graphs
	up to 12 vertices, 
	extending previous results~\cite{McKaySmallReconstruction}.
	
\subsection{Notation}

In this work, $H$ and $G$ are graphs, 
	all of which will are simple: there are no loops or multi-edges.
For a graph $G$, $V(G)$ is the vertex set and $E(G)$ is the edge set.
The number of vertices is denoted $n(G)$, while $e(G)$ is the number of edges.
The complement graph $\overline{G}$ is the graph on vertices $V(G)$ 
	with a vertex pair $xy$ in $E(\overline{G})$
	if and only if $xy \notin E(G)$.

For a 2-connected graph, a vertex of degree at least three is a 
	\emph{branch vertex}.
Vertices of degree two are \emph{internal} vertices, as they are
	contained between the endpoints of an ear.
Ears will be denoted with $\eps$.
For an ear $\eps$, the \emph{length} of $\eps$ is the number of edges between
	the endpoints
	and its \emph{order} is the number of internal vertices between the endpoints.
%Hence, the order of $e$ is one less than the length of $e$.
We will focus on the order of an ear.
An ear of order $0$ (length $1$) is a single edge, 
	called a \emph{trivial} ear.
Ears of larger order are \emph{non-trivial}.

Given a graph $G$ and an ear $\eps = x_0,x_1,\dots,x_k$, 
	the \emph{ear deletion} $G-\eps$ is the graph $G-x_1-x_2-\cdots-x_{k-1}$,
	where all internal vertices of $\eps$ are removed.
For an ear $\eps = x_0,x_1,\dots,x_{k-1},x_k$ where
	$x_0, x_k \in V(G)$ but
	$x_1,x_2,\dots,x_{k-1}$ are not vertices in $G$,
	the \emph{ear augmentation} $G+\eps$
	is given by adding the internal vertices of $\eps$ to $G$
	and adding the edges $x_ix_{i+1}$ for $i \in \{0,\dots,k-1\}$.

\section{Isomorph-Free Generation via Ear Decompositions}
\label{sec:technique}

In this section, 
	we describe a general method for performing isomorph-free generation
	in specific families of 2-connected graphs.
	
\subsection{The search space and ear augmentation}

%\begin{definition}
	Consider a family $\cF$ of unlabeled 2-connected graphs.
	We say $\cF$ is \emph{deletion-closed}
		if every graph $G$ in $\cF$ which is not a cycle
		has an ear $\eps$ so that the ear deletion $G - \eps$ is also in $\cF$.
	For an integer $N \geq 3$, 
		$\cF_{ N}$ is the set of graphs in $\cF$ 
		with at most $N$ vertices.
	
%\end{definition}

This requirement implies that 
	for every graph $G \in \cF$, 
	there exists a sequence $G \supset G_1 \supset G_2 \cdots$
	of ear deletions $G_{i+1} = G_i - \eps_i$ 
	where each graph $G_i$ is in $\cF$ 
	and the sequence $\{G_i\}$ terminates at some cycle $C_k \in \cF$.
By selecting an ear deletion which is invariant 
	to the representation of each $G_i$, 
	we define a canonical sequence of ear-deletions 
	that terminates at such a cycle.
While generating graphs of $\cF$, we
	shall only follow augmentations that correspond to these 
	canonical deletions, giving 
	a single sequence of augmentations for each isomorphism class in $\cF$.
This allows us to visit each isomorphism class in $\cF$
	exactly once using a backtracking search and
	without storing a list of previously visited graphs.
	
The search structure is that of a rooted tree: the 
	root node is an empty graph, with 
	the first level of the tree given by each cycle $C_k$ in $\cF_N$.
Each subsequent search node is extended upwards by 
	all canonical ear augmentations.
Since the search does not require a list of previously visited graphs,
	disjoint subtrees are independent and can be run concurrently
	without communication.
This leads to a search method which can be massively parallelized
	without a significant increase in overhead.

Note that being deletion-closed does not imply that \emph{every}
	ear $\eps$ in $G$ has $G-\eps$ in the family.
In fact, this does not even hold for the family of 2-connected graphs,
	as removing some ears leave the graph separable.
See Figure \ref{fig:EarSeparable} for an example of such an ear deletion.

\begin{figure}[t]
	\centering
	\mbox{
		\subfigure[A 2-connected graph $G$ and an ear $\eps$.]{
			\hspace{0.4in}
			\includegraphics[width=0.2\textwidth]{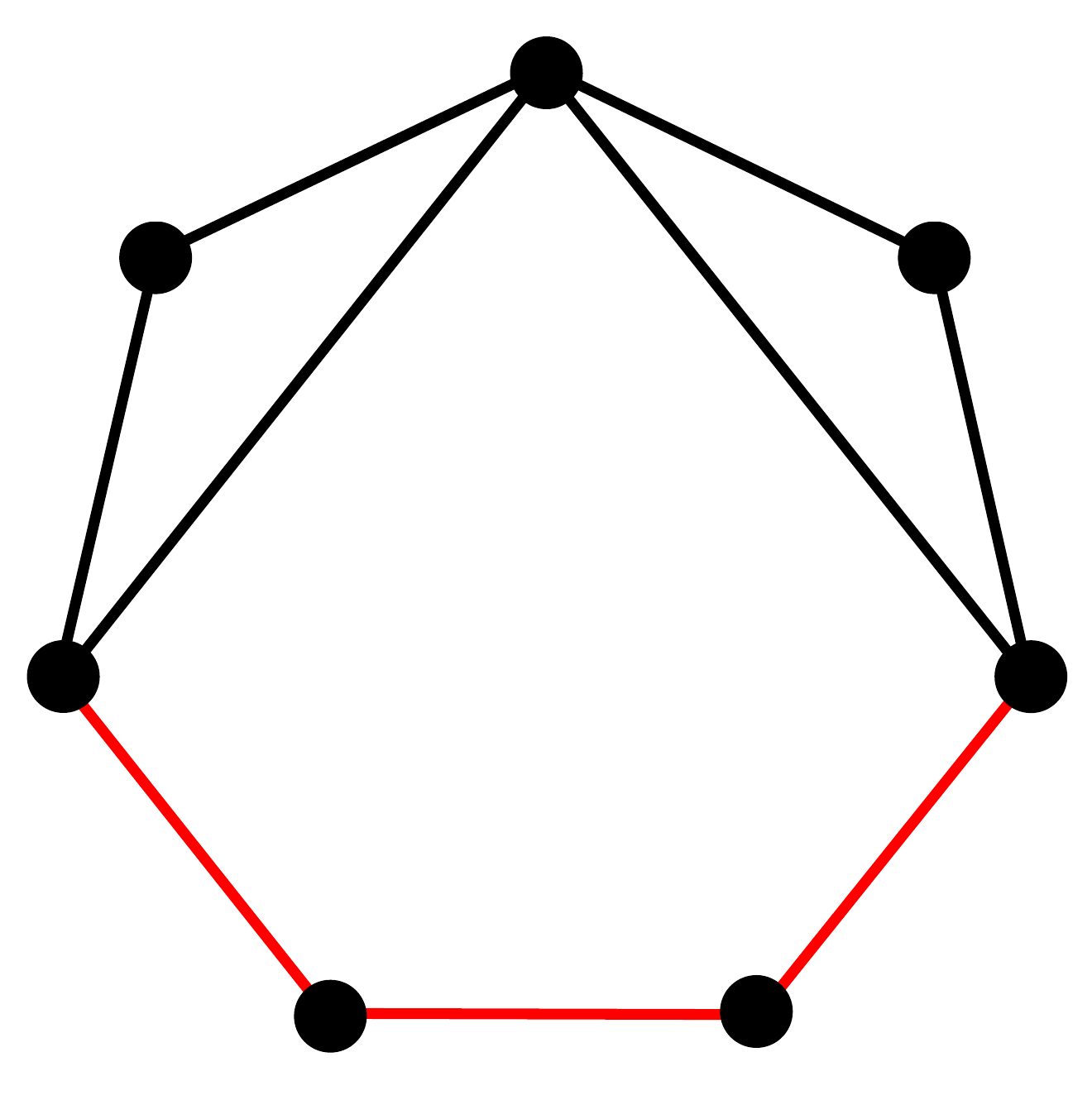}
			\hspace{0.4in}
		}
		\qquad
		\subfigure[$G-\eps$, separable.]{
			\hspace{0.4in}
			\includegraphics[width=0.2\textwidth]{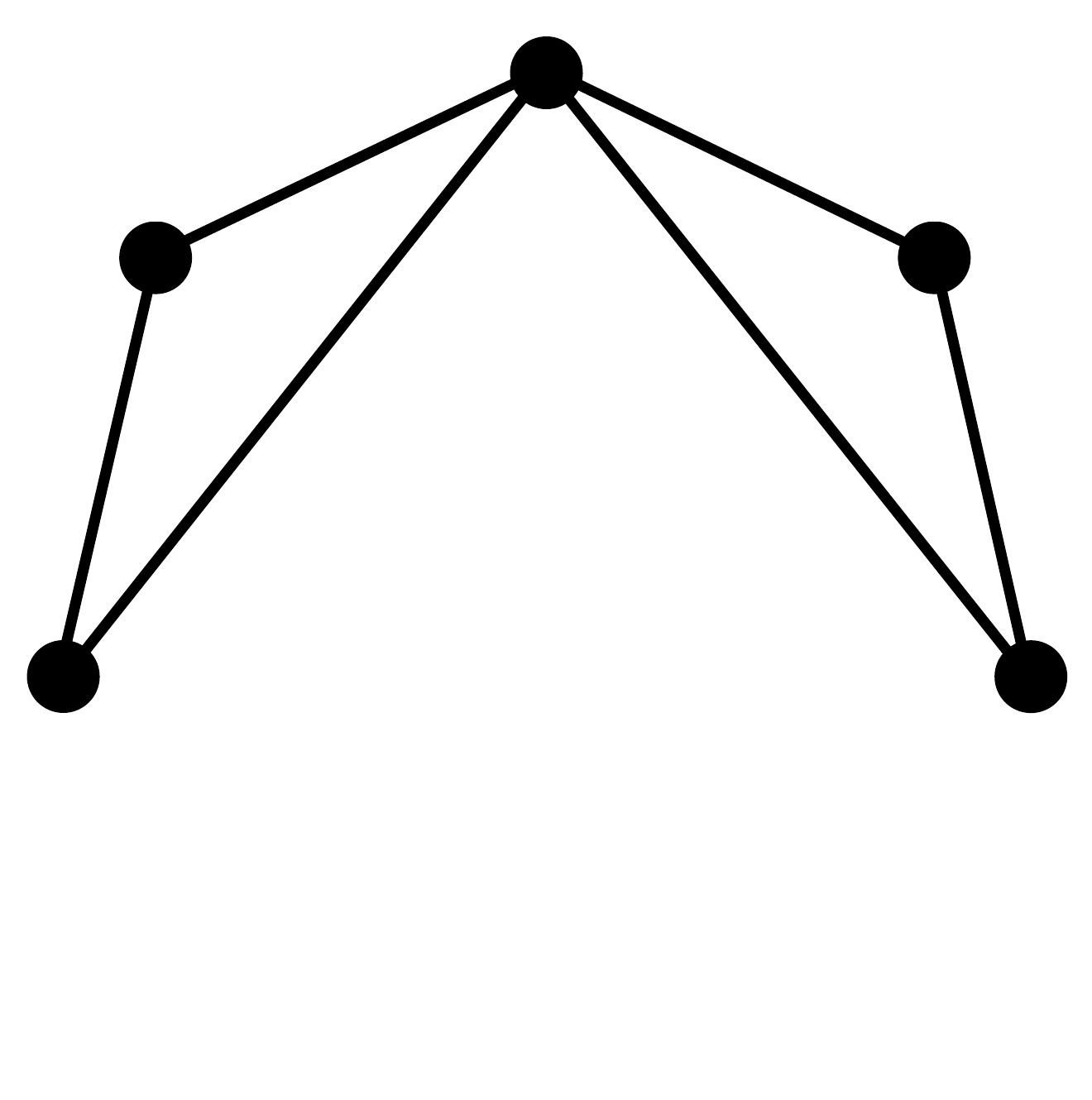}
			\hspace{0.4in}
		}
	}
	\caption{\label{fig:EarSeparable}A 2-connected graph $G$ and an ear $\eps$ %
		 whose removal makes $G-\eps$ separable.}
\end{figure}

Also, if $\cF$ is deletion-closed, then so is $\cF_{ N}$.
While the algorithms described could operate over $\cF$,
	a specific implementation will have a bounded number ($N$) 
	of vertices to consider.
Operating over $\cF_{ N}$ allows 
	for a finite number of possible ear augmentations at each step.

To augment a given labeled graph $G$,
	enumerate all pairs of vertices $x, y \in V(G)$
	and orders $r \geq 0$ so that $|V(G)|+r \leq N$
	and attempt adding an ear between $x$ and $y$
	of order $r$.
If an edge exists between $x$ and $y$, 
	then adding an ear of order $0$
	will immediately fail.
However, all other orders produce valid 2-connected graphs.
We then test if the augmentation $G+\eps$ is in $\cF$,
	discarding graphs which are not in the family.

%%%%%%%%%%%%%%%%%%%%%%%%%%%%%%%%%%%%%%%%%%%% ARBITRARY CLEARPAGE
%%%%%%%%%%%%%%%%%%%%%%%%%%%%%%%%%%%%%%%%%%%% ARBITRARY CLEARPAGE
%%%%%%%%%%%%%%%%%%%%%%%%%%%%%%%%%%%%%%%%%%%% ARBITRARY CLEARPAGE
%%%%%%%%%%%%%%%%%%%%%%%%%%%%%%%%%%%%%%%%%%%% ARBITRARY CLEARPAGE
%%%%%%%%%%%%%%%%%%%%%%%%%%%%%%%%%%%%%%%%%%%% ARBITRARY CLEARPAGE
%%%%%%%%%%%%%%%%%%%%%%%%%%%%%%%%%%%%%%%%%%%% ARBITRARY CLEARPAGE
%%%%%%%%%%%%%%%%%%%%%%%%%%%%%%%%%%%%%%%%%%%% ARBITRARY CLEARPAGE
%%%%%%%%%%%%%%%%%%%%%%%%%%%%%%%%%%%%%%%%%%%% ARBITRARY CLEARPAGE
%%%%%%%%%%%%%%%%%%%%%%%%%%%%%%%%%%%%%%%%%%%% ARBITRARY CLEARPAGE
\arbitrarypagebreak
\subsection{Augmenting by orbits}

By considering the automorphisms of a given graph,
	we can reduce the number of attempted ear augmentations.
First, note that between a given pair of vertices,
	multiple ears of the same order are in orbit with each other.
Second, if $\eps_1$ is an ear between $x_1$ and $y_1$
	and $\eps_2$ is an ear between $x_2$ and $y_2$, 
	then $\eps_1$ and $\eps_2$ are in orbit 
	if and only if they have the same order
		and the vertex sets $\{x_1,y_1\}$, $\{x_2,y_2\}$
		are in orbit under the automorphism group of $G$.
%If such a pair of ears $e_1, e_2$ are 
%	in orbit {and} one was selected
%	as the canonical deletion,
%	then both deletions $G-e_1$ and $G-e_2$
%	are isomorphic!
Third, if the sets of vertices $\{x_1,y_1\}$ and 
	$\{x_2,y_2\}$ are in orbit under the automorphism group of $G$,
	then the augmentations formed by 
	adding an ear of order $r$
	between $x_1$ and $y_1$ is
	isomorphic to adding an ear of order $r$
	between $x_2$ and $y_2$.

This redundancy under graphs with non-trivial automorphism group
	is removed by computing the orbits of vertex pairs,
	then only augmenting ears between
	a single representative of a pair orbit.
Pair orbits are computed by applying the
	generators of the automorphism group of $G$
	to the set of vertex pairs. 

%While this orbit calculation 
%	reduces the number of graphs 
%	which are generated,
%	in order to remove all duplication
%	we define a \emph{canonical deletion}.

\subsection{Canonical deletion of ears}
\def\del{\text{Delete$_{\cF}$}}

While augmenting by orbits reduces the 
	number of generated graphs,
	a \emph{canonical deletion} is defined
	to guarantee that each unlabeled graph in $\cF_{ N}$
	is enumerated exactly once.
This selects a unique ear $\eps = \del(G)$ 
	so that $G-\eps$ is in $\cF$
	and $\eps$ is invariant to the labeling of $G$.
That is, if $G_1$ and $G_2$ are isomorphic graphs
	with deletions 
	$\del(G_1) =  \eps_1$
	and $\del(G_2) = \eps_2$, 
	then there is an isomorphism
	$\pi$ from $G_1$ to $G_2$
	so that $\pi$ maps $\eps_1$ to $\eps_2$.
	
\def\lab{\operatorname{lab}}
In order to compute a representative $\del(G)$ 
	that is invariant to the 
	labels of $G$, 
	a canonical labeling of $V(G)$
	is computed.
A \emph{canonical labeling} is a map $\lab(G)$
	which maps graphs $G$ to 
	permutations $\pi_G : V(G) \to \{0,1,2,\dots,|V(G)|-1\}$
	so that for every labeled graph $G' \cong G$,
	the map $\phi : V(G) \to V(G')$ given by 
	$\phi(v) = \pi_{G'}^{-1}(\pi_{G}(v))$ for each $v \in V(G)$ 
	is an isomorphism from $G$ to $G'$.
In this sense, the map $\pi_G$ is invariant to the labels of $V(G)$.
McKay's \texttt{nauty} library~\cite{nauty,HRnauty} is used to 
	compute this canonical labeling.

Once the canonical labeling is computed, 
	the canonical deletion can be chosen  
	by considering all ears $\eps$ whose deletion ($G-\eps$)
	remains in $\cF_{ N}$,
	and selecting the ear with 
	(a) minimum length, and
	(b) lexicographically-least canonical label of branch vertices.
Algorithm \ref{alg:CanonDelete}
	details this selection procedure.

\begin{algorithm}[t]
	\caption{\label{alg:CanonDelete}Delete$_{\cF}G)$ --- %
		The Default Canonical Deletion in $\cF$}
	\begin{algorithmic}
		\STATE minOrder $\leftarrow n(G)$
		\STATE minLabel $\leftarrow n(G)^2$
		\STATE bestEear $\leftarrow$ {\bf null}
		\FOR{all vertices $x \in V(G)$ with $\deg x \geq 3$}
			\FOR{all ears $e$ incident to $x$}
				\STATE Let $y$ be the opposite endpoint of $e$
				\STATE label $\leftarrow \min\{ n(G)\pi_{G}(x) + \pi_{G}(y), n(G)\pi_{G}(y) + \pi_{G}(x)\}$
				\STATE $r \leftarrow$ order of $e$
				\IF{$G-e \in \cF_{ N}$}
					\IF{$r < $ minOrder}
						\STATE minOrder $\leftarrow r$
						\STATE minLabel $\leftarrow $ label
						\STATE bestEar $\leftarrow (x,y,r)$
					\ELSIF{$r = $ minOrder {\bf and} label $<$ minLabel}
						\STATE minLabel $\leftarrow$ label
						\STATE bestEar $\leftarrow (x,y,r)$
					\ENDIF
				\ENDIF
			\ENDFOR
		\ENDFOR
		\RETURN bestEar
	\end{algorithmic}
\end{algorithm}

%Note that Delete$_{\cF}(G)$
%	returns a triple $(x,y,r)$, instead of
%	an explicit ear.
%This is an implementation consideration,
%	since any ear between $x$ and $y$ of order $r$
%	is in orbit with any other ear between $x$ and $y$ of that order.

\subsection{Full implementation}

This isomorph-free generation scheme is formalized by
	the recursive algorithm Search$_{\cF}$($G, N$), 
	given in Algorithm \ref{alg:FullSearch}.
The full algorithm Search$_{\cF}$($N$) 
	searches over all graphs of order at most $N$ in $\cF$
	and
	is 
	initialized by calling Search$_{\cF}$($C_k, N$)
	for each $k \in \{3,4,\dots,N\}$.
Since the recursive calls to Search$_{\cF}(G,N)$
	are independent, they can be run concurrently
	without communication.

\begin{algorithm}[t]
	\caption{\label{alg:FullSearch} Search$_{\cF}$($G,N$) --- %
		Search all canonical augmentations of $G$ in $\cF_{N}$}
	\begin{algorithmic}
		\IF{Prune$_{\cF}$($G$) $=$ {\bf true}}
			\RETURN
		\ENDIF
		\IF{$G$ is a solution}
			\STATE Store $G$
		\ENDIF
		\STATE $R \leftarrow N - n(G)$ % MaxLength$_{\cF}(G,N)$.
		\FOR{all vertex-pair orbits $\mathcal{O}$}
			\STATE $\{x,y\} \leftarrow$ representative pair of $\mathcal{O}$
			\FOR{all orders $r \in \{0,1,\dots, R\}$}
				\STATE $G' \leftarrow G + \operatorname{Ear}(x,y,r)$
				\STATE $(x',y',r') \leftarrow$ Delete$_{\cF}(G')$
				\IF{$r = r'$ {\bf and} $\{x',y'\} \in \mathcal{O}$}
					\STATE Search$_{\cF}$($G',N$)
				\ENDIF
			\ENDFOR
		\ENDFOR
		\RETURN
	\end{algorithmic}
\end{algorithm}

For some applications, 
	it is possible
	to determine that  
	no solutions are reachable 
	under any sequence of ear augmentations.
In such a case, the algorithm can 
	stop searching at the current node
	to avoid computing all augmentations
	and canonical deletions.
Let Prune$_{\cF}(G)$ be the subroutine
	which detects if such a pruning is possible.

The framework for Algorithm \ref{alg:FullSearch}
	was implemented in the TreeSearch library\footnote{The TreeSearch library is available at \href{https://github.com/derrickstolee/TreeSearch}{https://github.com/derrickstolee/TreeSearch}}~\cite{TreeSearch},
	a C++ library for managing 
	a distributed search using the Condor scheduler~\cite{condor-practice}.
This implementation was executed
	on the Open Science Grid~\cite{OpenScienceGrid}
	using the University of Nebraska Campus Grid~\cite{WeitzelMS}.
Performance calculations in this paper are based on 
	the accumulated CPU time over this heterogeneous set of computation servers.
%%%%%%%% CHECK WITH DEREK WEITZEL
For example, the nodes available on the University of Nebraska Campus Grid
	consist of Xeon and Opteron processors with 
	a speed range of 2.0-2.8 GHz.
%%%%%
All code and documentation written for this paper
	are available in a GitHub repository\footnote{The EarSearch library is available at \href{https://github.com/derrickstolee/EarSearch}{https://github.com/derrickstolee/EarSearch}}.
%For specific implementation details, consult the documentation~\cite{EarSearchDocs}.
	
\subsection{Generating all 2-connected graphs}

Using the isomorph-free generation scheme of canonical ear deletions,
	we can generate all unlabeled 2-connected graphs on $N$ vertices
	or graphs on $N$ vertices with exactly $E$ edges.

\begin{definition}
	Let $N$ and $E$ be integers.  
	Set $g_N$ to be the number of 
		unlabeled 2-connected graphs on $N$ vertices
		and $g_{N,E}$ to be the number
		of unlabeled 2-connected graphs on 
		$N$ vertices and $E$ edges.
	${\mathcal G}_N$ is the family of 2-connected graphs
		on up to $N$ vertices.
	${\mathcal G}_{N,E}$ is the family of 2-connected graphs
		on up to $N$ vertices and up to $E$ edges.
\end{definition}

Robinson~\cite{robinson1970enumeration}
	computed the values of $g_N$  and $g_{N,E}$, 
	listed in~\cite{OEIS2connected,RobinsonTables}.
Note that ${\mathcal G}_N$ and ${\mathcal G}_{N,E}$ are 
	deletion-closed families,
	and can be searched using isomorph-free generation
	via ear augmentations.
We revisit the three main behaviors of the algorithm:
	canonical deletion, pruning, and determining solutions.

%\begin{paragraph}
\noindent{\bf Canonical Deletion:}
	The canonical deletion algorithm in Algorithm \ref{alg:CanonDelete}
	suffices for the class of 2-connected graphs.
	Recall this algorithm selects from ears $\eps$ so that $G-\eps$ 
		remains 2-connected, selecting one of minimum length
		and breaking ties by using the canonical labels 
		of the endpoints.
%\end{paragraph}

%\begin{paragraph}
\noindent{\bf Pruning:} 
If the number of edges is fixed to be $E$, 
	a graph with more than $E$ edges should be pruned.
	Also, a graph on $n(G) < N$ vertices must add at least 
		$N-n(G)+1$ edges during ear augmentations 
		in order to achieve $N$ total vertices.
	If $e(G) + (N-n(G)+1) > E$, then
		no graph on $N$ vertices with at most $E$ edges
		can be reached by ear augmentations from $G$.
	In this case, the node can be pruned.
%\end{paragraph}

\noindent{\bf Solutions:}
	 A 2-connected graph is a solution if and only if
	 $n(G) = N$, and if $E$ is specified then $e(G) = E$ must also hold.

\begin{table}[t]
	\centering
	\begin{tabular}[h]{c|r|r@{ }r@{ }r@{ }r@{.}l}
		\multicolumn{1}{c|}{$N$} & \multicolumn{1}{c|}{$g_N$} & \multicolumn{5}{c}{CPU time}\\
		\hline 
		  5 &        10 & &&& 0&01s \\
		  6 &        56 & &&& 0&11s \\
		  7 &       468 & &&& 0&26s \\
		  8 &      7123 & &&& 10&15s \\
		  9 &    194066 & && 5m & 17&27s \\
		 10 &   9743542 & &  7h & 39m  & 28&47s \\
		 11 & 900969091 & 71d & 22h & 22m  & 49&12s \\
	\end{tabular}
	\caption{\label{tab:2connectedTime}Comparing $g_N$ and %
		the time to generate $\mathcal{G}_{N}$.}
\end{table}

Table \ref{tab:2connectedTime} compares the number of 2-connected graphs
	of order $N$ and
	the CPU time to enumerate all such graphs.
Both the computation times and the sizes of the sets grow exponentially.
Since the number of 2-connected graphs on $N$ vertices grows so quickly,
	to test the performance for larger orders,
	the number of edges was also fixed to be slightly more than $N$.
Table \ref{tab:CNE} shows these computation times.

\begin{table}[t]
	\centering
	\small
	\begin{tabular}[h]{r||r|r|r|r|r|r|r|r|r|r}
		$N$ &  $E=11$ & $E=12$ & $E=13$ & $E=14$ & $E=15$ 
		 & $E=16$ & $E=17$ & $E=18$ & $E=19$ & $E=20$ \\
		\hline 
		 \multirow{2}{*}{$10$} & 9 & 121 & 1034 & 5898 & 23370 & 69169 & 162593 & 317364 & 530308 & 774876 \\
				 			    & 0.01 &  0.16 & 1.73 &12.99 & 65.88 & 167.12 & 472.68 & 972.62 & 2048.85 & 3631.71 \\
		\hline
		 \multirow{2}{*}{$11$} && 11 & 189  & 2242 & 17491 & 94484 & 380528 & 1212002 & 3194294 & 7197026  \\
				 			   && 0.02 & 0.38 & 5.52 & 56.10 & 260.53 & 1212.89 & 4069.09 & 13104.24 & 32836.53 \\
		\hline
		 \multirow{2}{*}{$12$}  &&& 13 & 292 & 4544 & 46604 & 334005 & 1747793 & 7274750 &
 24972998 \\
				 			   &&& 0.03 & 0.86 & 17.56 & 286.00 & 1226.71 & 6930.00 & 33066.80 & 125716.68  \\
		\hline
		 \multirow{2}{*}{$13$}  &&&& 15 & 428 & 8618 & 113597 & 1031961 & 6945703 & 36734003 \\
				 			    &&&& 0.05 & 1.83 & 44.64 & 469.02 & 5174.92 & 39018.15 & 227436.84 \\
		\hline
		 \multirow{2}{*}{$14$}  &&&&& 18 & 616 & 15588 & 257656 & 2925098 & 24532478  \\
				 			    &&&&& 0.08 & 3.82 & 90.51 & 1573.81 & 21402.18 & 183482.70 \\
		\hline
		 \multirow{2}{*}{$15$}  &&&&&& 20 & 855 & 26967 & 519306 & 7654299  \\
				 			     &&&&&& 0.12 & 7.56 & 198.84 & 4567.43 & 76728.79 \\
		\hline
		 \multirow{2}{*}{$16$}  &&&&&&& 23 & 1176 & 44992 & 1111684  \\
				 			     &&&&&& & 0.18 & 15.56 & 498.20 & 13176.05 \\
	\end{tabular}
	\caption{\label{tab:CNE}Comparing $g_{N,E}$ (above) and the time to generate $\mathcal{G}_{N,E}$ (below, in seconds).}
\end{table}

%%%%%%%%%%%%%%%%%%%%%%%%%%%%%%%%%%%%%% APPLICATION 1: UNIQUE SATURATION
%%%%%%%%%%%%%%%%%%%%%%%%%%%%%%%%%%%%%% APPLICATION 1: UNIQUE SATURATION
%%%%%%%%%%%%%%%%%%%%%%%%%%%%%%%%%%%%%% APPLICATION 1: UNIQUE SATURATION
%%%%%%%%%%%%%%%%%%%%%%%%%%%%%%%%%%%%%% APPLICATION 1: UNIQUE SATURATION
%%%%%%%%%%%%%%%%%%%%%%%%%%%%%%%%%%%%%% APPLICATION 1: UNIQUE SATURATION
%%%%%%%%%%%%%%%%%%%%%%%%%%%%%%%%%%%%%% APPLICATION 1: UNIQUE SATURATION
%%%%%%%%%%%%%%%%%%%%%%%%%%%%%%%%%%%%%% APPLICATION 1: UNIQUE SATURATION
\section{Application 1: Uniquely $H$-Saturated Graphs}
\label{sec:saturation}

Our first application
	forbids certain subgraphs,
	which decreases  
	the number of graphs to enumerate.
We investigate \emph{uniquely $H$-saturated} graphs.
	
\begin{definition}
	Let $H$ and $G$ be graphs.
	$G$ is \emph{$H$-saturated}
		if $G$ contains no copy of $H$
		and for every edge $e \in E(\overline{G})$ 
		there is at least one copy of $H$ in $G+e$.
	$G$ is \emph{uniquely $H$-saturated}
		if $G$ contains no copy of $H$
		and for every edge $e \in E(\overline{G})$,
		there is a unique copy of $H$ in $G + e$.
\end{definition}

While it is easy to see that $H$-saturated graphs always exist,
	being uniquely $H$-saturated is a very strict condition.
In fact, not all $H$ admit \emph{any} graph which is uniquely $H$-saturated.
For $k \in \{6,7,8\}$, 
	no uniquely $C_k$-saturated graphs exist~\cite{WengerDiss}.
For other graphs $H$, there is a very limited list of uniquely $H$-saturated graphs.
If $G$ is uniquely $C_3$-saturated, 
	then $G$ is either a star ($K_{1,n}$) or 
	a Moore graph of diameter two and girth five:
	$G$ has no triangles and every pair of non-adjacent vertices
	have exactly one common neighbor.
There are at least three Moore graphs: 
	$C_5$, the Petersen graph, the Hoffman-Singleton graph,
	and possibly some $57$-regular graphs on $3250$ vertices~\cite{MooreGraph}.
There are exactly ten uniquely $C_4$-saturated graphs~\cite{UniqueC4Saturation}.
If $G$ is uniquely $C_5$-saturated, then $G$ is either a friendship graph 
	(every pair of vertices have exactly one common neighbor) or one of
	a finite number of other examples~\cite{WengerDiss}.
The only friendship graphs are the \emph{windmills}: 
	$\frac{n-1}{2}$ triangles sharing a common vertex~\cite{FriendshipGraphs}.

\subsection{Uniquely $K_r$-saturated graphs}
	
Historically, the first host graph $H$ where
	the extremal problems on $H$-saturated graphs
	were solved was the complete graph $K_r$~\cite{Turan,KrSaturation}.
However, uniquely $K_r$-saturated graphs have
	evaded attempts at classification.
Only empty graphs are uniquely $K_2$-saturated,
	and uniquely $K_3$-saturated graphs are stars and Moore graphs 
	(since $K_3 \cong C_3$).
There are two known infinite families of uniquely $K_r$-saturated graphs:
	books and cycle complements.

The $t$-book on $n$ vertices 
	is a complete graph $K_t$ (the \emph{spine}) 
	joined with an independent set
	on $n-t$ vertices (the \emph{pages}).
The $(r-2)$-book has cliques of size at most $r-1$ 
	and all non-edges are in the independent set.
Adding any edge in the independent set forms exactly one $K_r$
	by using the two endpoints and the $r-2$ vertices in the spine.
Figures \ref{subfig:uk3-b}, \ref{subfig:uk4-b}, and \ref{subfig:uk5-b} 
	are examples of $(r-2)$-books for $r \in \{3,4,5\}$.
For $r = 3$, note that the $(r-2)$-book 
	with $n$ pages is isomorphic to the star $K_{1,n}$ 
	with $n$ leaves.

\def\ukfigwidth{0.13\textwidth}
\begin{figure}[t]
	\centering
	\begin{tabular}[h]{cccccc}
		\subfigure[\label{subfig:uk3-b}1-book]{\includegraphics[width=\ukfigwidth]{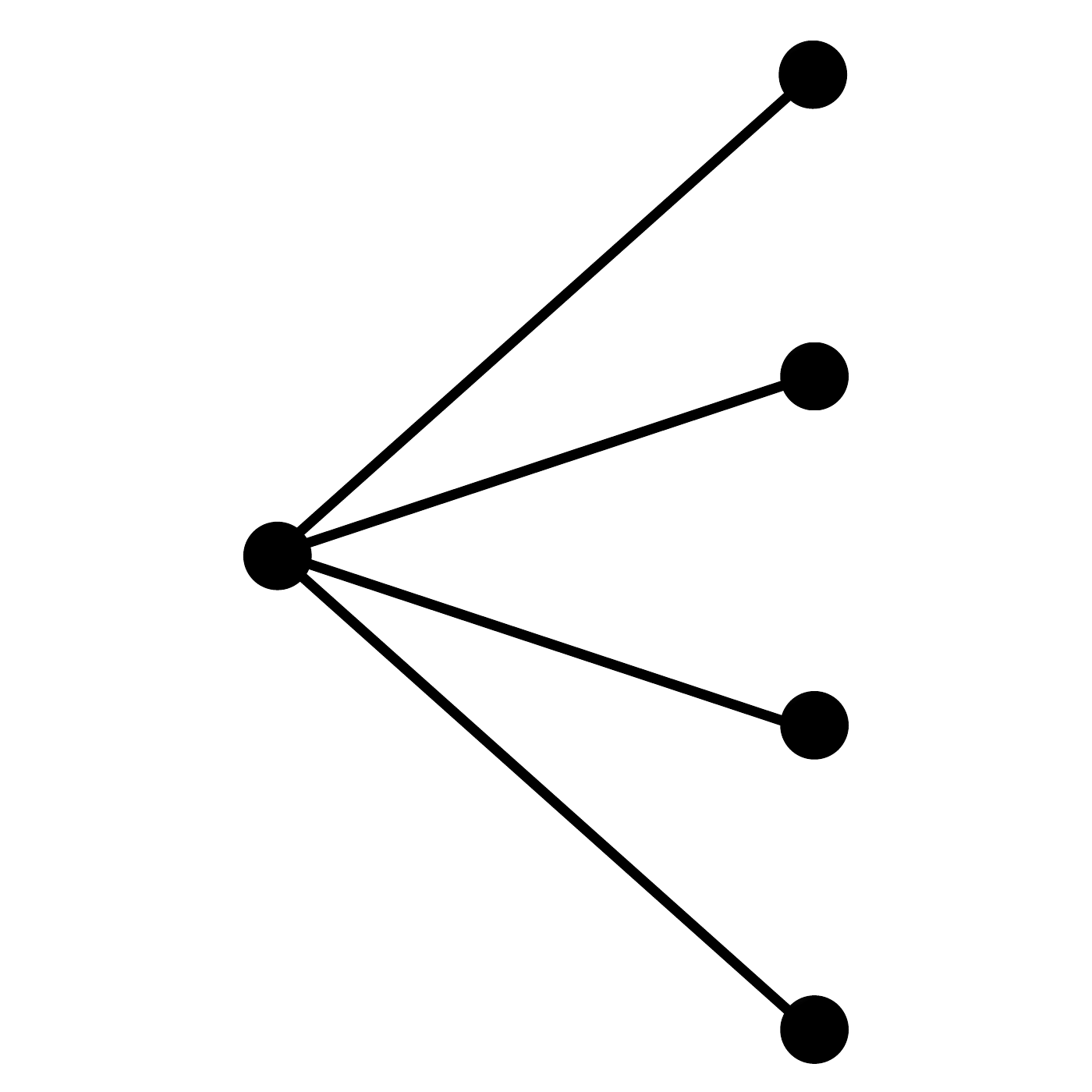}}
		&
		\subfigure[\label{subfig:uk4-b}2-book]{\includegraphics[width=\ukfigwidth]{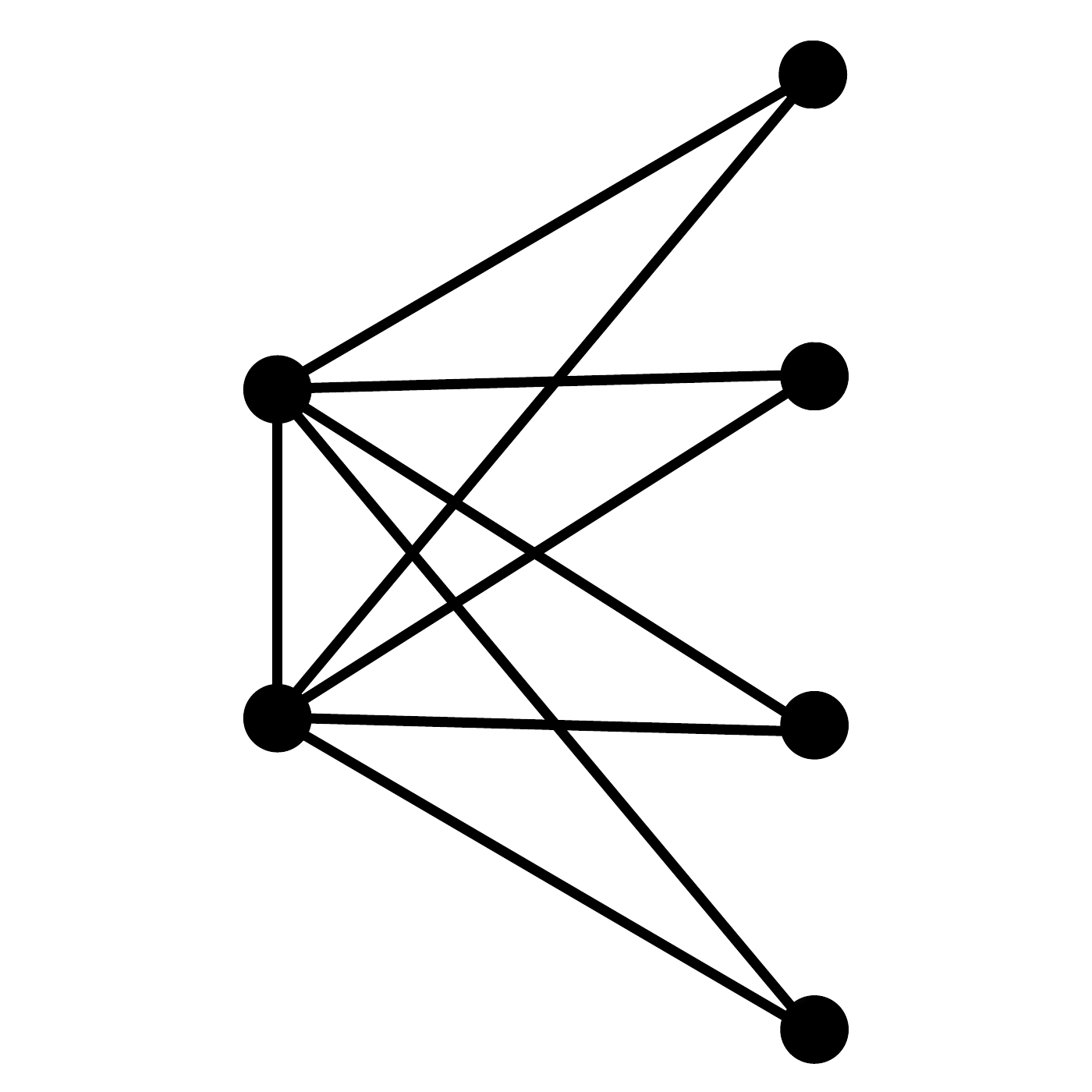}}
		&
		\subfigure[\label{subfig:uk5-b}3-book]{\includegraphics[width=\ukfigwidth]{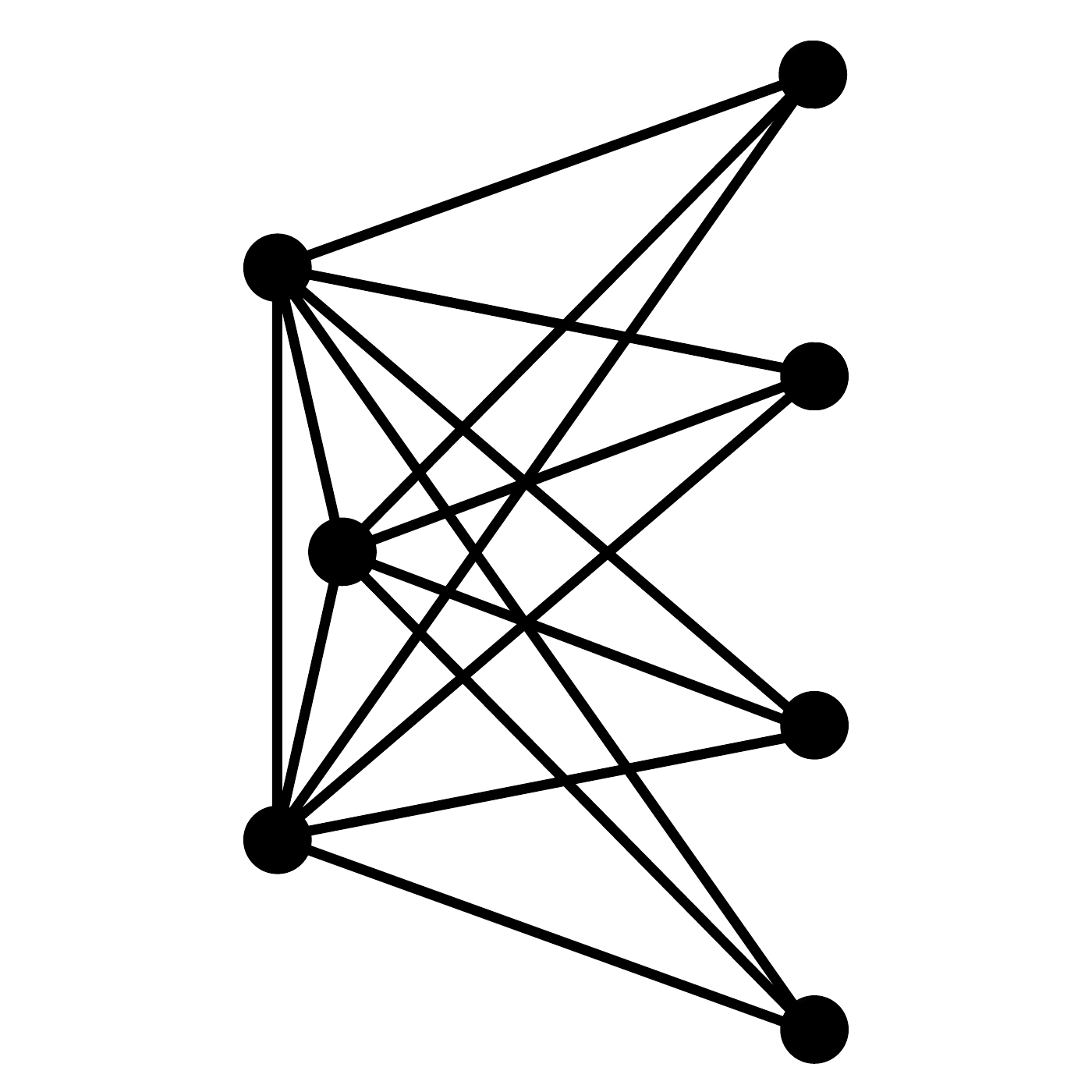}}
		&
		\subfigure[\label{subfig:cycle-5}$\overline{C}_5$]{\includegraphics[width=\ukfigwidth]{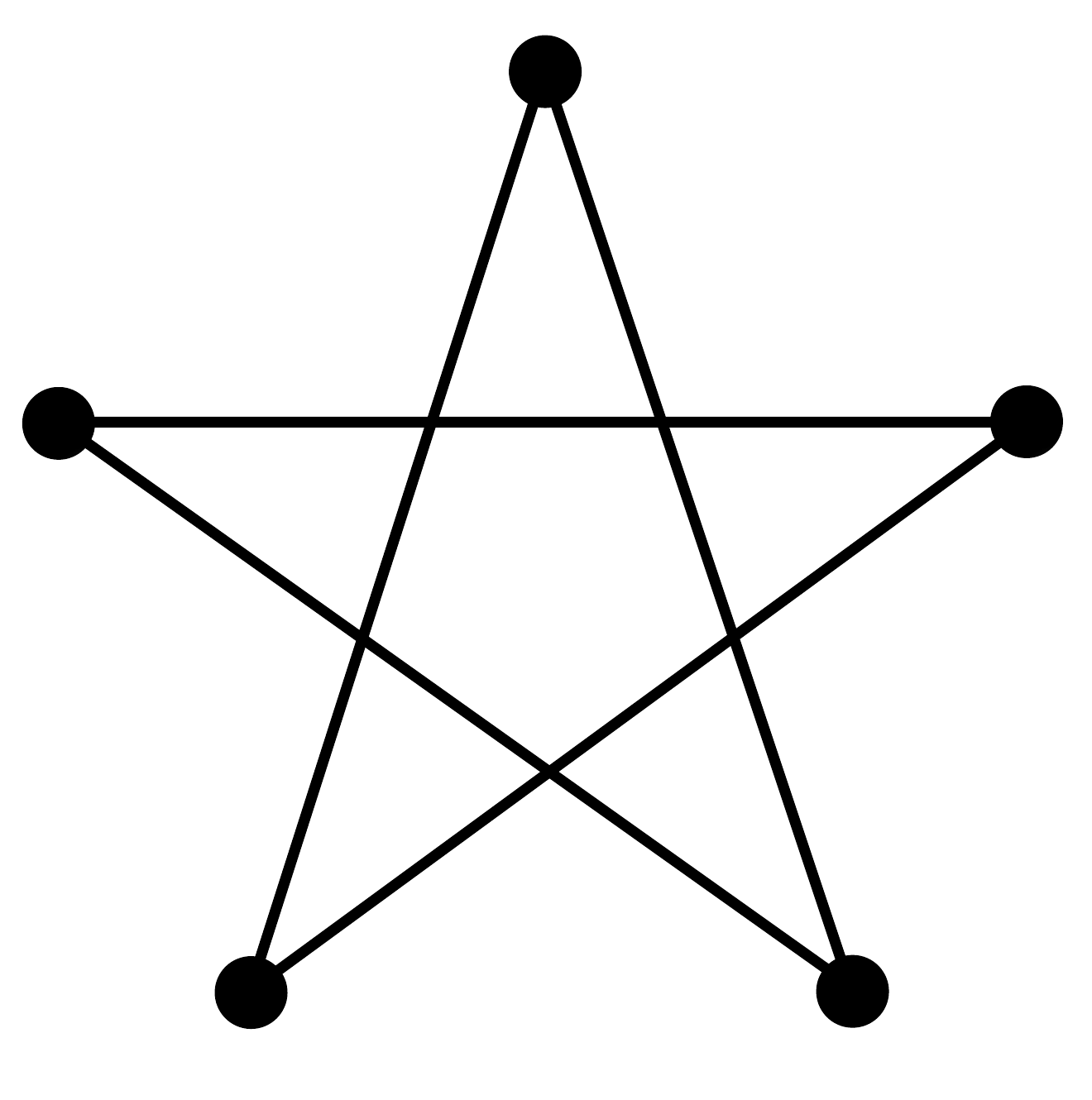}}
		&
		\subfigure[\label{subfig:cycle-7}$\overline{C}_7$]{\includegraphics[width=\ukfigwidth]{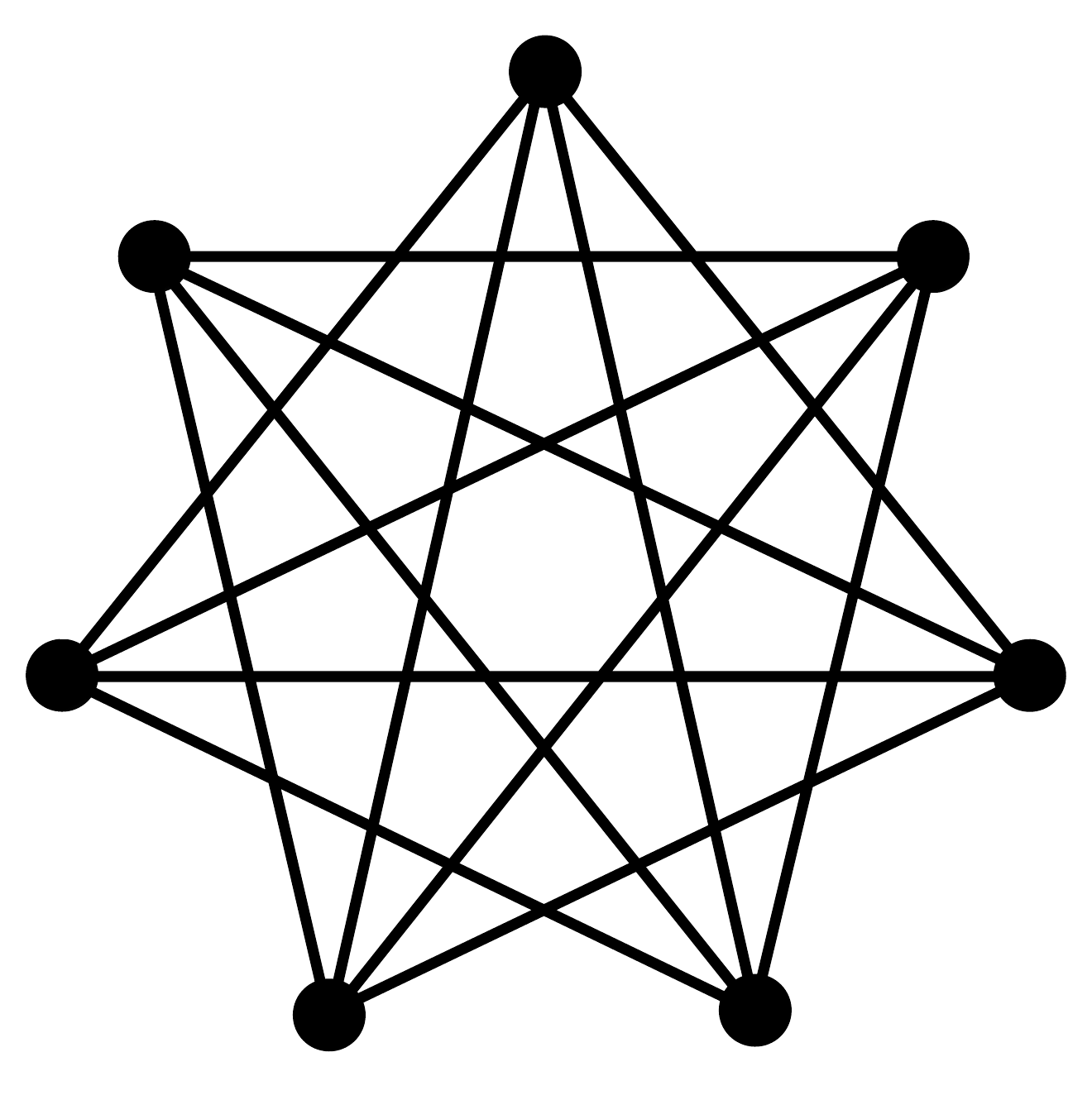}}
		&
		\subfigure[\label{subfig:cycle-9}$\overline{C}_9$]{\includegraphics[width=\ukfigwidth]{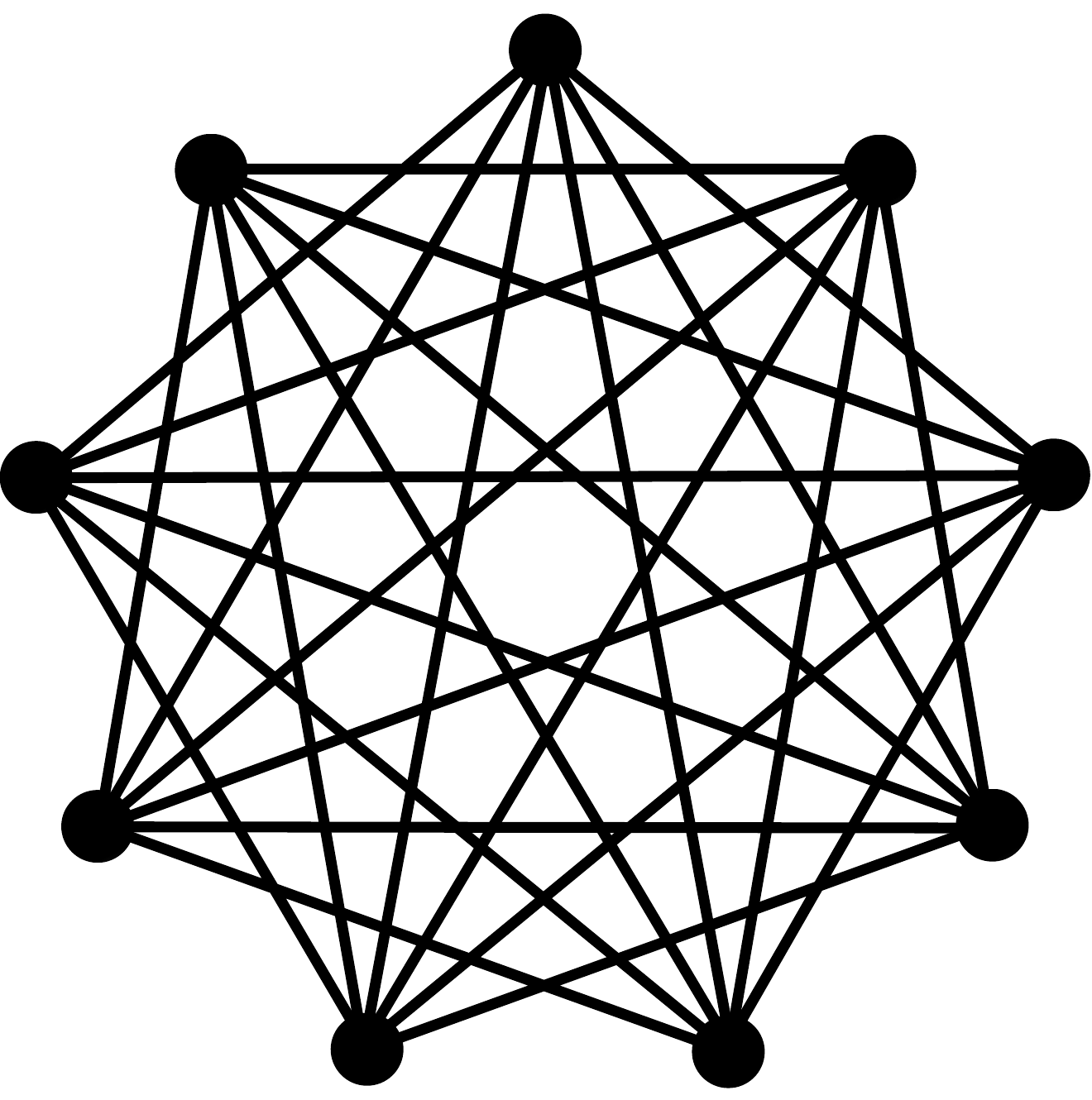}}
	\end{tabular}
	\caption{\label{fig:books}\label{fig:cycles}
		The $(r-2)$-books and complemented $(2r-1)$-cycles
		are uniquely $K_{r}$ saturated.}
\end{figure}

The complement of the $(2r-1)$-cycle is also uniquely $K_r$-saturated.
All pairs of vertices in a clique of $\overline{C}_{2r-1}$ 
	are at distance at least two 
	in the original cycle.
Such a set must have size at most $r-1$.
However, adding an edge from the cycle to its complement 
	creates a unique copy of $K_r$.
Figures \ref{subfig:cycle-5}, \ref{subfig:cycle-7},
	and \ref{subfig:cycle-9} are examples of cycle complements for $r \in \{3,4,5\}$.
Note that for $r = 3$, the complemented $(2r-1)$-cycle is isomorphic to $C_5$,
	one of the Moore graphs.

The cycle complement construction differs from the book 
	in that it gives exactly one uniquely $K_r$-saturated
	graph for each $r$.
Also of note is that the cycle complement has no dominating vertex
	(a vertex adjacent to all other vertices)
	and is regular.

Given a uniquely $K_r$-saturated graph $G$,
	adding a dominating vertex to $G$ results 
	in a uniquely $K_{r+1}$-saturated graph.
This process is reversible: given a uniquely $K_r$-saturated graph
	with a dominating vertex, remove that vertex to 
	find a uniquely $K_{r-1}$-saturated graph.
Repeating this process will eventually result in a graph with no
	dominating vertex.
Starting with the $t$-book, this process terminates in an independent set,
	which is uniquely $K_2$-saturated.
This motivates the question: 
	which uniquely $K_r$-saturated graphs
	have no dominating vertex?

\begin{conjecture}[\cite{CooperWengerCommunication}]
	For each $r$, there are a finite number of uniquely $K_r$-saturated graphs
		with no dominating vertex.
\end{conjecture}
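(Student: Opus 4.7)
The plan is to convert the uniqueness condition into a rigid combinatorial identity and then bound $n$ by a mix of counting and spectral arguments. Let $G$ be a uniquely $K_r$-saturated graph on $n$ vertices with no dominating vertex. The hypotheses give, for every non-edge $uv$, exactly one $(r-2)$-clique inside $N(u)\cap N(v)$, while $K_r$-freeness forbids any $(r-2)$-clique in $N(u)\cap N(v)$ when $uv\in E(G)$. Because no vertex dominates, every vertex has at least one non-neighbor, so this uniqueness identity constrains the local structure at every vertex.

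First, I would try to force $G$ to be near-regular. Fix a vertex $u$ and double-count pairs $(v, K)$ where $v$ is a non-neighbor of $u$ and $K$ is the unique $(r-2)$-clique in $N(u) \cap N(v)$; this yields $n - 1 - \deg u = \sum_K |N(K) \setminus (N(u) \cup \{u\})|$, where $K$ ranges over the $(r-2)$-cliques in $N(u)$. Combining this with $K_r$-freeness (no $(r-1)$-clique in any $N(u)$) should squeeze $\deg u$ into a range depending only on $r$ and $n$. Second, I would form the common-clique matrix $C_{r-2}$ whose $(u,v)$-entry counts $(r-2)$-cliques in $N(u)\cap N(v)$; the hypotheses give the matrix identity $C_{r-2} = J - I - A + \mathrm{diag}(c_v)$, where $A$ is the adjacency matrix and $c_v$ is the number of $(r-2)$-cliques in $N(v)$. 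For $r = 3$ this collapses to the Moore-graph equation $A^2 + A - (d-1)I = J$, whose integrality constraints yield $n \in \{5, 10, 50, 3250\}$ and settle the conjecture in this case; the goal for larger $r$ is to produce an analogous Diophantine obstruction.

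The principal obstacle is that $C_{r-2}$ is not a polynomial in $A$ once $r \geq 4$: counting $(r-2)$-cliques in a common neighborhood mixes $A^2$ with higher-order local tensors encoding triangle-level structure, so the plain spectral integrality method is unavailable. To get around this I would attempt an induction on $r$ using the reverse of the ``add a dominating vertex'' operation, together with a dichotomy: either some vertex of $G$ has small degree, in which case the counting identity above directly bounds $n$ in terms of $r$, or every vertex has large degree, in which case Kruskal--Katona applied to the $(r-2)$-clique complex of each neighborhood, together with $K_r$-freeness, should force a contradiction. Making the large-degree case work is where I expect the argument to stall, since this regime is exactly where the known constructions (books and complemented odd cycles) live and where any would-be new infinite family would have to appear; a successful attack likely requires a substitute for the Moore-graph eigenvalue trick that can handle the nonlinear clique-counting operator $C_{r-2}$.
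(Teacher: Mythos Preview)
The statement you are trying to prove is presented in the paper as an \emph{open conjecture}, not a theorem; the paper offers no proof and instead uses the conjecture as motivation for a computational search that enumerates uniquely $K_r$-saturated graphs on few vertices. So there is no ``paper's own proof'' to compare against.

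As for your proposal on its own terms: you have correctly identified the $r=3$ mechanism (the Moore-graph equation and its integrality constraints do settle finiteness there), and your matrix identity $C_{r-2} = J - I - A + \operatorname{diag}(c_v)$ is a reasonable way to encode the hypothesis. But you are also candid that the argument does not close. The specific gaps are these. First, your double-count yields $n - 1 - \deg u = \sum_K |N(K)\setminus (N(u)\cup\{u\})|$, but nothing in that identity by itself pins $\deg u$ to a bounded window; you would need a lower bound on the right-hand side that scales with $n$, and $K_r$-freeness of $G[N(u)]$ alone does not supply one. Second, the proposed induction via ``the reverse of the add-a-dominating-vertex operation'' is not available: your graph $G$ has no dominating vertex by hypothesis, so there is nothing to remove, and adding one moves you to $K_{r+1}$-saturation rather than $K_{r-1}$. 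Third, and most seriously, the large-degree branch of your dichotomy is exactly where the known examples (the complemented odd cycles $\overline{C}_{2r-1}$) sit, and Kruskal--Katona on the clique complex of a neighborhood only bounds the number of $(r-2)$-cliques given the number of smaller cliques; it does not interact with the global uniqueness constraint across non-edges in any way that forces $n$ to be bounded. Absent a replacement for the spectral integrality step when $C_{r-2}$ is not a polynomial in $A$, the plan remains a heuristic outline rather than a proof, which is consistent with the problem's status as open.
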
	

In an effort to generate more evidence for this conjecture,
	examples of such graphs are generated.
All known examples happen to be regular,
	which motivates the following conjecture.

\begin{conjecture}[\cite{CooperWengerCommunication}]
	For each $r$, a uniquely $K_r$-saturated graph with
		no dominating vertex is regular.
\end{conjecture}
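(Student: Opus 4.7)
The plan is to extract from the unique-clique condition a local counting identity for $d(v)$, upgrade this to a polynomial constraint on the degree sequence, and then appeal to a spectral rigidity argument to collapse the sequence to a constant; the no-dominating-vertex hypothesis enters to eliminate book-type extremal solutions where the degree sequence could otherwise split. For $r = 3$ this template recovers the classical fact that uniquely $K_3$-saturated graphs with no dominating vertex are Moore graphs of diameter two and girth five, and hence regular. The substantive content is pushing it to $r \geq 4$.

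\emph{Step 1 (Local identity).} For each $(r-2)$-clique $K$ of $G$, set $I_K = \{v \in V(G) \setminus K : K \subseteq N(v)\}$. Since $G$ is $K_r$-free, $I_K$ is independent, and the unique-clique condition says that each non-edge $uv$ of $G$ has a unique witness $K$ with $u, v \in I_K$, so the non-edges of $G$ partition as $\bigsqcup_K \binom{I_K}{2}$. Counting non-neighbors of a fixed vertex $v$ grouped by their witness gives
\[
n - 1 - d(v) \;=\; \sum_{K \subseteq N(v)} \bigl(|I_K| - 1\bigr), \qquad (\ast)
\]
the sum running over $(r-2)$-cliques $K$ contained in $N(v)$.

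\emph{Step 2 (Polynomial constraint on degrees).} For $r = 3$, $(\ast)$ simplifies to $A\mathbf{d} = (n-1)\mathbf{1}$; combined with the adjacency identity $A^2 + A - D = J - I$ (immediate from triangle-freeness plus unique common neighbors of non-adjacent pairs) and applied to $\mathbf{d}$, this yields the quadratic $d(v)^2 - n\,d(v) + (2m - n + 1) = 0$ for every $v$, so at most two distinct degrees occur. For general $r$, I would iterate: introduce for each $j \leq r-2$ the clique-count vector $\mathbf{c}_j$ whose $v$th entry is the number of $j$-cliques in $N(v)$, use the $K_r$-free condition to express $\mathbf{c}_{j+1}$ in terms of $\mathbf{c}_j$ and $A$, and feed these recurrences back into $(\ast)$ to obtain a higher-degree polynomial identity in $\mathbf{d}$ that still leaves only finitely many possible values of $d(v)$.

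\emph{Step 3 (Rigidity and the main obstacle).} To pass from ``finitely many degrees'' to ``exactly one degree'' one needs connectivity (provided by $2$-connectedness) together with a spectral rigidity argument. At $r = 3$, this is supplied by the fact that $A^2 + A - D = J - I$ restricts the spectrum of $A$ on $\mathbf{1}^\perp$ (for any eigenvector $\mathbf{x}\perp\mathbf{1}$ with eigenvalue $\lambda$, one gets $D\mathbf{x} = (\lambda^2 + \lambda + 1)\mathbf{x}$, which pins $\mathbf{x}$ to a single degree class); integrality of the resulting eigenvalue multiplicities then forces $D$ to be scalar -- the Hoffman--Singleton rigidity behind the Moore-graph classification. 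This is where I expect the proof to stall for $r \geq 4$, and presumably the reason the statement remains conjectural: no analogue of Hoffman--Singleton is known for unique-$(r-2)$-clique conditions, because the operator encoding common $(r-2)$-clique counts of two vertices is not a low-degree polynomial in $A$, so the usual eigenvalue-multiplicity calculus does not apply. A complete proof will likely require either a new spectral rigidity theorem for the clique-incidence matrix of $G$, or a direct combinatorial swap argument -- for instance along a shortest path between two hypothetical vertices of different degree -- that forces the two degrees to agree.
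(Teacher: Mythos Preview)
The statement you are attempting to prove is presented in the paper as an open \emph{conjecture}, attributed to Cooper and Wenger; the paper offers no proof. What the paper does is gather computational evidence by exhaustively generating the uniquely $K_r$-saturated graphs with no dominating vertex on small vertex sets (up to $12$ for $r=4$, up to $11$ for $r\in\{5,6\}$) and observing that all examples found are regular. There is therefore no paper proof to compare your proposal against.

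As for the proposal itself: Steps~1 and~2 are correct for $r=3$ and recover the standard route to the Moore-graph classification. The identity~$(\ast)$ is valid for all $r$, and the partition of non-edges by witness cliques is sound. However, the proposal is not a proof of the conjecture, and you say as much. The genuine gap is exactly where you locate it: for $r\geq 4$ there is no known operator identity playing the role of $A^2+A-D=J-I$, because the count of common $(r-2)$-cliques between two vertices is not expressible as a polynomial in $A$ alone. Your Step~2 gesture toward iterating clique-count vectors $\mathbf{c}_j$ is not worked out and, absent such an identity, does not obviously yield a polynomial constraint on the degree sequence with bounded root set. Even granting finitely many degree values, the Step~3 rigidity argument has no known analogue for $r\geq 4$; the Hoffman--Singleton integrality trick relies on the specific quadratic relation, and no substitute is proposed.

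In short: the paper does not claim a proof, your outline correctly reproduces the $r=3$ case, and for $r\geq 4$ the obstacle you name is the reason the statement is still conjectural.
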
	

For $r \geq 3$, a uniquely $K_r$-saturated graph has diameter two, 
	and is 2-connected.
We apply our generation technique with an application-specific 
	pruning mechanism to 
	find these graphs.

\subsection{The Search}

In order to apply isomorph-free generation using ear augmentations,
	we must show that uniquely $K_r$-saturated graphs are 2-connected.
In fact, we prove a stronger statement using $k$-connectivity.
A graph $G$ is \emph{$k$-connected} if there exists no set $S$ of $k-1$ vertices
	so that either $G-S$ is disconnected or $G-S$ consists of a single vertex.
	
\begin{proposition}
	For all $r \geq 4$,
		if $G$ is a
		%uniquely 
		$K_r$-saturated graph
		on at least $r+1$ vertices,
		then $G$ is $(r-2)$-connected.
\end{proposition}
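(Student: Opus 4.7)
The plan is to argue by contradiction, exploiting the single consequence of $K_r$-saturation that drives all such arguments: if $uv \notin E(G)$, then the $K_r$ created by adding $uv$ must use the new edge (since $G$ itself contains no $K_r$), so $u$ and $v$ have at least $r-2$ common neighbors in $G$ (in fact inducing a $K_{r-2}$, but we only need the count).

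Suppose for contradiction there is a separating set $S \subseteq V(G)$ with $|S| \leq r-3$. First I would verify that $G-S$ cannot consist of a single vertex: since $n(G) \geq r+1$ and $|S| \leq r-3$, we have $|V(G)\setminus S| \geq 4$, so the only failure mode of $(r-2)$-connectivity to rule out is that $G-S$ is disconnected. Let $A$ and $B$ be distinct components of $G-S$, and choose $u \in A$ and $v \in B$. Because $u,v \notin S$ lie in different components of $G-S$, they are non-adjacent in $G$, so $uv \notin E(G)$.

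By $K_r$-saturation, $u$ and $v$ share at least $r-2$ common neighbors in $G$. Any common neighbor $w$ lying outside $S$ would be adjacent to $u \in A$ and to $v \in B$ in the graph $G-S$, placing $w$ simultaneously in both components — impossible. Hence every common neighbor of $u$ and $v$ lies in $S$, which forces $|S| \geq r-2$ and contradicts $|S| \leq r-3$.

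There is no serious obstacle in this argument; the only bookkeeping is confirming that the hypothesis $n(G) \geq r+1$ is strong enough to exclude the ``$G-S$ is a single vertex'' failure mode of the paper's connectivity definition, so that the common-neighbor count alone delivers the contradiction. The restriction $r \geq 4$ is used only to ensure that $(r-2)$-connectivity is a nontrivial statement (and in particular guarantees $2$-connectivity, which is what the ear-decomposition framework requires).
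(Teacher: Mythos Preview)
Your proof is correct and follows essentially the same route as the paper: pick $u,v$ in different components of $G-S$, observe that $K_r$-saturation gives $r-2$ common neighbors, and note these must all lie in $S$, contradicting $|S|\le r-3$. Your write-up is in fact a bit more careful than the paper's, since you explicitly rule out the ``$G-S$ is a single vertex'' clause of the connectivity definition using $n(G)\ge r+1$, which the paper leaves implicit.
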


\begin{proof}
	If $G$ is not $(r-2)$-connected, 
		there is a set $S = \{x_1,\dots, x_{r-3}\}$
		of $r-3$ vertices
		so that $G-S$ has at 
		least two components.
	Let $u$ and $v$ be vertices in two different components.
	Then, $uv$ is not an edge in $G$.
	Since there is a copy of $K_r$ in $G+uv$,
		then there is a clique $\{y_1,y_2,\dots,y_{r-2}\}$
		of order $r-2$ so that each vertex $y_i$ in the clique
		is adjacent to both $u$ and $v$.
	At least one of the vertices $y_i$ is not in $S$, 
		so in $G-S$, 
		$u$ and $v$ are in the same component.
	This contradicts the assumption that $G-S$ is disconnected,
		and hence $G$ is $(r-2)$-connected. 
\end{proof}

Let $\cU^r$ be the class of 2-connected graphs $G$ with no copy of $K_r$ as a subgraph
	and for every edge $e\in \overline{G}$,
	there is at most one copy of $K_r$ in $G+e$.
These constraints are \emph{ear-monotone} in that
	every $G$ satisfying the constraints and any ear $\eps$
	has $G - \eps$ satisfying the constraints (except possibly 2-connectedness).
To enumerate $\cU^r$,
	we use the default canonical deletion, Delete$_{\cU}(G)$.
Since this deletion always removes a deletable ear of minimum length,
	and we are searching for uniquely $K_r$-saturated graphs
	with no dominating vertex,
	we can prune whenever our graph has a dominating vertex.
Further, since $\cU^r$ is defined by an ear-monotone property,
	we prune whenever that property is violated.

The cases for $r \in \{2,3\}$ are solved, outside of the missing Moore graph
	of degree $57$.
Hence, we run our search for $r \in \{4,5,6\}$,
	where we are guaranteed to have at least one 
	uniquely $K_r$-saturated graph
	with no dominating vertex
	of order at most $12$.
Enumerating
	$\cU_{12}^4$, $\cU_{11}^5$, and $\cU_{11}^6$
	 resulted in the following theorems.

\begin{theorem}
There are exactly three uniquely $K_4$-saturated graphs
	of order at most 12
	without a dominating vertex:
	\begin{cem}
		\item $\overline{C}_7$, on $7$ vertices of degree $4$ (Figure \ref{subfig:uk4-7}).
		\item A triangulation of the M\"obius strip, on $10$ vertices of degree $5$ (Figure \ref{subfig:uk4-10}).
		\item The icosahedron with antipodal vertices joined, 
			on $12$ vertices of degree $6$ (Figure \ref{subfig:uk4-12}).
	\end{cem}
\end{theorem}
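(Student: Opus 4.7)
The plan is to establish the theorem computationally, by running the isomorph-free generation scheme of Algorithm \ref{alg:FullSearch} over the class $\mathcal{U}^4_{12}$ and then filtering the output. The key structural ingredient is that $\mathcal{U}^4$ is deletion-closed: the constraints defining $\mathcal{U}^4$ (no $K_4$ subgraph, and at most one $K_4$ in $G + e$ for every non-edge $e$) are ear-monotone, since deleting internal vertices of an ear can only destroy copies of $K_4$ and can only reduce the number of copies of $K_4$ produced by adding any particular edge. Combined with the proposition above, which ensures that every uniquely $K_4$-saturated graph on at least $5$ vertices is $2$-connected, this guarantees that every uniquely $K_4$-saturated graph on at most $12$ vertices appears exactly once in the canonical search tree rooted at the cycles $C_3, C_4, \ldots, C_{12}$.

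The first step is to instantiate Algorithm \ref{alg:FullSearch} with $N = 12$, the default canonical deletion $\del$ from Algorithm \ref{alg:CanonDelete}, and a pruning function $\mathrm{Prune}_{\mathcal{U}^4}(G)$ that returns true whenever $G$ contains a $K_4$, whenever some non-edge $e \in E(\overline{G})$ already lies in two or more copies of $K_4$ in $G + e$, or whenever $G$ has a dominating vertex (since we only seek graphs without one, and the ear deletion preserves the existence of such a vertex in a controlled way). The solution test at each node checks whether $n(G) \leq 12$, $G$ is uniquely $K_4$-saturated (every non-edge completes exactly one $K_4$), and $G$ has no dominating vertex.

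Next, I would run Search$_{\mathcal{U}^4}$ initialized at each $C_k$ for $k \in \{3, \ldots, 12\}$, distributed across the compute grid as described. The output is the full list of uniquely $K_4$-saturated graphs on at most $12$ vertices without a dominating vertex. To confirm correctness, I would independently verify that each of the three claimed graphs, namely $\overline{C}_7$, the Möbius-strip triangulation on $10$ vertices of Figure \ref{subfig:uk4-10}, and the antipodally-augmented icosahedron of Figure \ref{subfig:uk4-12}, contains no $K_4$, has no dominating vertex, is regular of the stated degree, and has every non-edge completing exactly one $K_4$; this is a routine check once explicit adjacency lists are fixed.

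The main obstacle is neither the enumeration logic nor the verification, but rather establishing enough confidence in the computer search. The delicate point is ensuring that the pruning subroutine is sound, that is, never discards a branch that could extend to a uniquely $K_4$-saturated graph on at most $12$ vertices; soundness follows from the ear-monotonicity of the ``no $K_4$'' and ``no non-edge in two $K_4$'s'' conditions, but the dominating-vertex prune requires a brief argument that adding ear vertices cannot destroy a dominating vertex, only create one, so that once present it persists or can be avoided by construction. With that in hand, the search exhaustively covers $\mathcal{U}^4_{12}$, the output consists of exactly the three graphs listed, and the theorem follows.
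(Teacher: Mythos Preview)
Your approach is essentially the paper's: the theorem is established by running the isomorph-free ear-augmentation search over $\mathcal{U}^4_{12}$ with the default canonical deletion and the pruning rules you list, exactly as the paper does.

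There is, however, one genuine error in your justification of the dominating-vertex prune. You write that ``adding ear vertices cannot destroy a dominating vertex, only create one, so that once present it persists.'' This is false. If $v$ is dominating in $G$ and you augment by an ear $\eps$ of order at least $1$ whose endpoints do not include $v$ (or of order at least $2$ even if $v$ is an endpoint), then the internal vertices of $\eps$ are not adjacent to $v$, and $v$ ceases to be dominating in $G+\eps$. So monotonicity in your sense fails, and your argument does not establish soundness of the prune.

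The paper's (brief) justification is different and correct: the canonical deletion $\del$ always removes a deletable ear of \emph{minimum order}. The point is that if $G'$ has no dominating vertex but $G'-\eps$ does, then $\eps$ must have had order at least $1$; one then checks that in this situation $G'$ contains a deletable trivial ear (using that the dominating vertex of $G'-\eps$ is adjacent to both endpoints of $\eps$ and has high degree), so $\eps$ was not of minimum order and hence was not the canonical deletion. Consequently, along the canonical path back from any solution with no dominating vertex, no intermediate graph acquires one, and pruning on a dominating vertex is sound. You should replace your persistence claim with this minimum-length argument.
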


\begin{figure}[t]
	\centering
	\mbox{
		\subfigure[\label{subfig:uk4-7}]{\includegraphics[height=1.25in]{figs-delete/uk4-7}}
		\qquad
		\subfigure[\label{subfig:uk4-10}]{\includegraphics[height=1.25in]{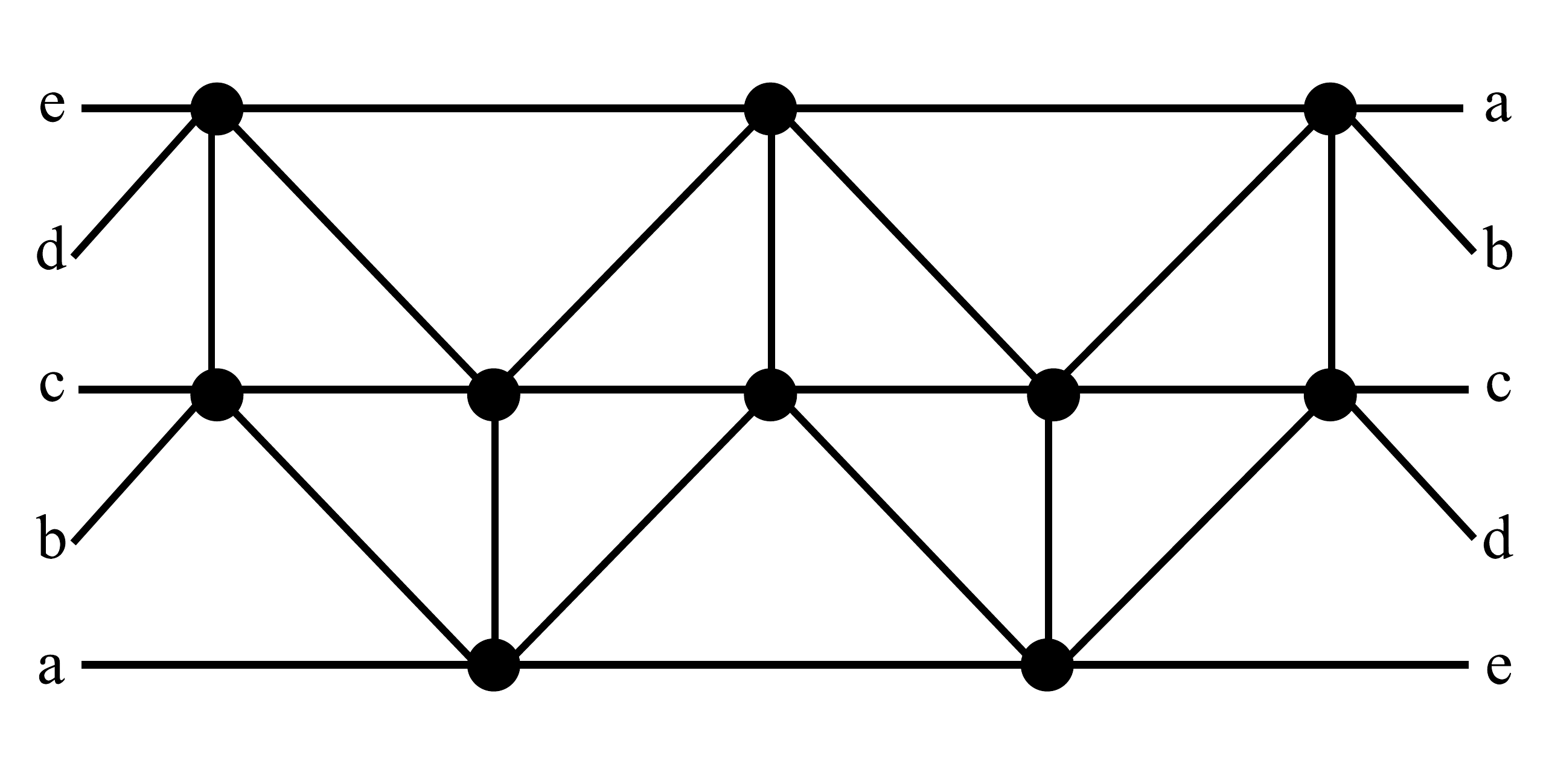}}
		\qquad
		\subfigure[\label{subfig:uk4-12}]{\includegraphics[height=1.25in]{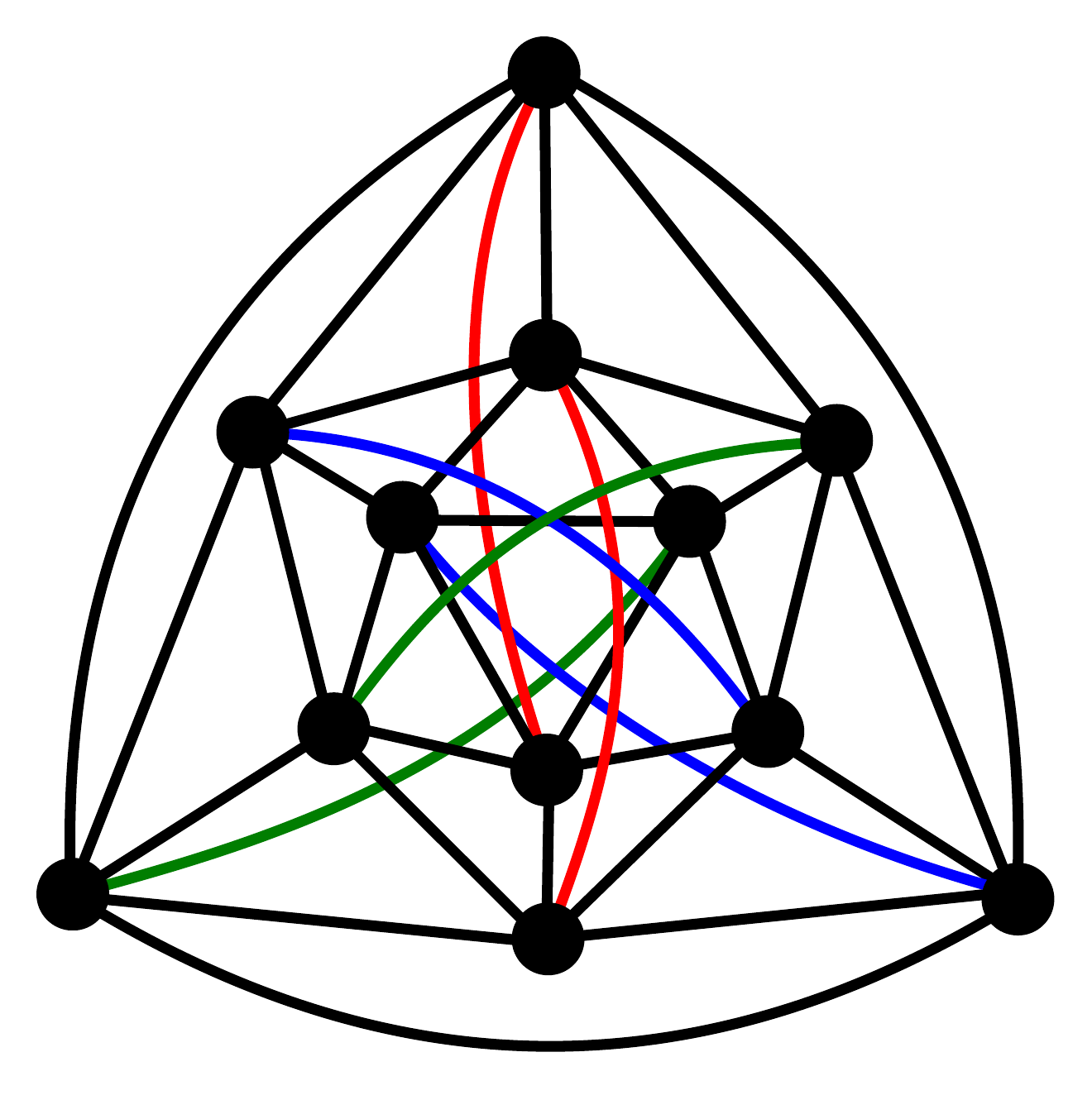}}
	}
	\caption{\label{fig:uk4}The uniquely $K_4$-saturated graphs on at most 12 vertices with no dominating vertex.}
\end{figure}

\begin{theorem}
	There is exactly one uniquely $K_5$-saturated graph
		of order at most $11$ without a dominating vertex:
		$\overline{C}_9$, on $9$ vertices of degree $6$ (Figure \ref{subfig:cycle-9}).
\end{theorem}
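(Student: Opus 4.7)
The plan is to derive this as a computational consequence of the isomorph-free generation scheme developed in Section \ref{sec:technique}, specialized to $\cF = \cU^5$. By the preceding proposition, any uniquely $K_5$-saturated graph on at least $6$ vertices is $3$-connected, in particular $2$-connected, so the ear-augmentation framework applies. Moreover, the defining constraints of $\cU^5$ (no $K_5$ subgraph, and at most one $K_5$ in any edge-augmentation) are ear-monotone: deleting an ear cannot create a $K_5$ nor duplicate a $K_5$ count in an edge-augmentation. Hence $\cU^5$ is deletion-closed and Algorithm \ref{alg:FullSearch} visits each isomorphism class in $\cU^5_{11}$ exactly once.

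First I would instantiate the search with the default canonical deletion from Algorithm \ref{alg:CanonDelete}. For the pruning subroutine Prune$_{\cU^5}$, I would cut off any branch where (i) the current graph contains a $K_5$, (ii) some non-edge lies in two or more copies of $K_5$, or (iii) the current graph already possesses a dominating vertex. Pruning (iii) is justified because a dominating vertex persists under ear augmentations that do not touch it, and the search tree descendants inherit this property; since the theorem statement restricts to graphs without dominating vertices, these subtrees produce no solutions. Pruning (i) and (ii) is justified by ear-monotonicity.

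Second, at each node with $n(G) \leq 11$, I would test whether $G$ is actually uniquely $K_5$-saturated: membership in $\cU^5$ only guarantees ``at most one'' copy of $K_5$ per non-edge, so we must additionally verify that every non-edge of $G$ lies in exactly one $K_5$. This solution check is straightforward to implement via clique enumeration on each pair in $E(\overline{G})$.

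Finally I would run the search, starting from each cycle $C_k$ for $3 \le k \le 11$, and record all solutions. Verifying by hand that $\overline{C}_9$ itself is $3$-connected, $6$-regular, $K_5$-free, has no dominating vertex, and that every non-edge (i.e., every edge of $C_9$) is completed uniquely to a $K_5$ by four other vertices in $\overline{C}_9$ confirms it as a genuine solution. The content of the theorem is then the empirical observation that the search produces exactly this one graph. The main obstacle is not mathematical but computational: trusting that the pruning is sound (which reduces to the ear-monotonicity argument above) and that the enumeration of $\cU^5_{11}$ terminates in reasonable time. Given the aggressive pruning by the forbidden-$K_5$ condition and the dominating-vertex condition, and the performance data already reported for $\mathcal{G}_{N,E}$ with $N \le 11$, the run is well within the feasible range demonstrated earlier in the paper.
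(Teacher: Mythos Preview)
Your approach is essentially identical to the paper's: instantiate Algorithm~\ref{alg:FullSearch} over $\cU^5_{11}$ with the default canonical deletion, prune on $K_5$-containment, on non-edges lying in two copies of $K_5$, and on the presence of a dominating vertex, then report the solutions. The paper does exactly this and records the outcome (and the CPU times in Table~\ref{tab:UniqueK4Time}).

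There is, however, a genuine gap in your justification of pruning~(iii). You write that ``a dominating vertex persists under ear augmentations that do not touch it.'' This is false: if $v$ dominates $G$ and you augment by an ear of order $r \geq 1$ between two vertices $x_0, x_k \neq v$, the new internal vertices $x_1,\dots,x_r$ are not adjacent to $v$, so $v$ is no longer dominating in $G+\eps$. Thus persistence alone does not justify cutting the subtree. The paper's (terse) justification is different and hinges on the specific form of Delete$_{\cF}$: the canonical deletion always selects a deletable ear of \emph{minimum} order. The point is that if $G$ has a dominating vertex and you augment by a non-trivial ear $\eps$, then $G+\eps$ still contains deletable trivial ears (coming from edges at the former dominating vertex), so the canonical deletion of $G+\eps$ has order $0 < r$ and the augmentation by $\eps$ is rejected anyway. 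Hence no accepted child of $G$ loses its dominating vertex, and the subtree can be pruned. You should replace the persistence argument with this minimum-order argument; otherwise the soundness of your search is not established.
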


\begin{theorem}
	There is exactly one uniquely $K_6$-saturated graph
		of order at most $11$ without a dominating vertex:
		$\overline{C}_{11}$, on $11$ vertices of degree $8$.
\end{theorem}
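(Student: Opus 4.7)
The proof is a computational enumeration that mirrors the strategy used for the previous two theorems. The plan is to apply the isomorph-free generation scheme of Algorithm \ref{alg:FullSearch} to the family $\cU^6$ of 2-connected graphs containing no $K_6$ and such that every non-edge lies in at most one potential $K_6$, restricted to at most $11$ vertices, and then to check each terminal graph for being uniquely $K_6$-saturated with no dominating vertex.

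First, I would verify that $\overline{C}_{11}$ is itself uniquely $K_6$-saturated: this is immediate from the general construction discussed earlier, since $\overline{C}_{2r-1}$ is uniquely $K_r$-saturated for every $r \geq 3$, so the graph certainly appears in the output. I would also note that, by the proposition, any $K_6$-saturated graph on at least $7$ vertices is $4$-connected, hence 2-connected, so ear augmentation applies and no uniquely $K_6$-saturated example on $7 \leq n \leq 11$ vertices escapes the search. The cycle $C_6$ and graphs on fewer than $6$ vertices cannot be uniquely $K_6$-saturated because they lack enough vertices to form $K_6$ after adding a single edge.

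Next, I would run \texttt{Search}$_{\cU^6}(C_k, 11)$ for each $k \in \{3,4,\dots,11\}$, using the default canonical deletion \texttt{Delete}$_{\cU^6}$ from Algorithm \ref{alg:CanonDelete}. The pruning routine \texttt{Prune}$_{\cU^6}(G)$ should return true in any of the following situations: $G$ contains a copy of $K_6$; some non-edge of $G$ already lies in two or more copies of $K_6$; or $G$ contains a dominating vertex. The first two conditions are ear-monotone (the offending subgraphs survive every subsequent ear augmentation), and the third is enforced because we only seek examples without a dominating vertex. For each graph $G$ with $n(G) = 11$ that survives, test the stronger property that every non-edge lies in exactly one copy of $K_6$; if so, record $G$ as a solution.

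The main obstacle is not mathematical but computational: the search tree for $\cU^6_{11}$ could in principle be large, since $K_6$-free constraints are relatively weak on sparse graphs and the $4$-connectivity requirement only kicks in at the leaves. In practice, however, the combined ear-monotone pruning (forbidden $K_6$ and the at-most-one-$K_6$-per-non-edge condition) should rule out most branches quickly, analogous to the successful runs for $\cU^4_{12}$ and $\cU^5_{11}$. After the search completes and verifies that $\overline{C}_{11}$ is the only graph output, the theorem follows. I would report the wall-clock and CPU time together with the counts at intermediate levels, paralleling Tables \ref{tab:2connectedTime} and \ref{tab:CNE}, as evidence that the search was exhaustive.
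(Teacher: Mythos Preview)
Your proposal is correct and matches the paper's approach exactly: the theorem is established by exhaustively enumerating $\cU^6_{11}$ via the ear-augmentation search with the stated pruning, and the paper gives no further argument beyond reporting the outcome of that computation. One minor slip: the solution test must be applied to every graph visited, not only those with $n(G)=11$, since the theorem asserts uniqueness among all orders up to $11$.
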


While these graphs were known, 
	an exhaustive search had previously been completed
	for up to only nine vertices~\cite{CooperCommunication}.

\def\cU{\mathcal{U}}
\begin{table}[t]
	\centering
	\begin{tabular}[h]{c|r@{ }r@{ }r@{ }r@{ }r@{.}l|r@{ }r@{ }r@{ }r@{ }r@{.}l|r@{ }r@{ }r@{ }r@{ }r@{.}l}
		\multicolumn{1}{c|}{$N$} 
			 & \multicolumn{6}{c|}{CPU time for $r = 4$} 
			 & \multicolumn{6}{c|}{CPU time for $r = 5$} 
			 & \multicolumn{6}{c}{CPU time for $r = 6$}\\
		\hline 
		  8 &   
		    &   &    &   &    1&01s &
		    &   &    &   &    7&90s &
		    &   &    &   &    8&80s \\  
		  9 
		    &   &    &   &    & 31&51s  &
		     &  &  &  4m & 12&75s &
		     &  &  &  4m & 14&90s \\
		  10 & 
		  		&  &  & 29m & 31&46s   &
				&  &  5h & 24m & 38&29s &
				 &  &  8h &  0m & 47&43s \\

		  11 &
		  &   1d &  8h & 13m & 59&16s  &
		  	&  44d & 20h & 39m & 34&66s &
			 &  63d & 13h & 31m & 24&30s \\
		  12 & & 155d &  7h & 52m & 36&51s &
		      \multicolumn{6}{c|}{} &\\
	\end{tabular}
	\caption{\label{tab:UniqueK4Time}The time to search %
		for uniquely $K_r$-saturated graphs with at most $N$ vertices.}
\end{table}

The search required about $155$ days of computation time to search for uniquely $K_4$-saturated graphs on 
	up to $12$ vertices.
Timing statistics for smaller $N$ are available in Table \ref{tab:UniqueK4Time}.
Notice that as $r$ increases, the uniquely $K_r$-saturated graphs become more dense
	and the restriction on $\cU^r$ requires more graphs to be generated,
	leading to longer search times.
This caused the generation of uniquely $K_5$-saturated 
	and uniquely $K_6$-saturated graphs 
	 on twelve vertices to be left incomplete.

%%%%%%%%%%%%%%%%%%%%%%%%%%%%%%%%%%%%%% APPLICATION 2: EDGE RECONSTRUCTION
%%%%%%%%%%%%%%%%%%%%%%%%%%%%%%%%%%%%%% APPLICATION 2: EDGE RECONSTRUCTION
%%%%%%%%%%%%%%%%%%%%%%%%%%%%%%%%%%%%%% APPLICATION 2: EDGE RECONSTRUCTION
%%%%%%%%%%%%%%%%%%%%%%%%%%%%%%%%%%%%%% APPLICATION 2: EDGE RECONSTRUCTION
%%%%%%%%%%%%%%%%%%%%%%%%%%%%%%%%%%%%%% APPLICATION 2: EDGE RECONSTRUCTION
%%%%%%%%%%%%%%%%%%%%%%%%%%%%%%%%%%%%%% APPLICATION 2: EDGE RECONSTRUCTION
%%%%%%%%%%%%%%%%%%%%%%%%%%%%%%%%%%%%%% APPLICATION 2: EDGE RECONSTRUCTION
%%%%%%%%%%%%%%%%%%%%%%%%%%%%%%%%%%%%%% APPLICATION 2: EDGE RECONSTRUCTION
\section{Application 2: The Edge Reconstruction Conjecture}
\label{sec:reconstruction}

In the second application, we restrict the search to sparse 2-connected graphs 
	and utilize the structure of the search tree
	in order to minimize pairwise comparisons among the
	list of generated graphs.

\subsection{Background}

The Reconstruction Conjecture and Edge Reconstruction Conjecture 
	are two of the oldest unsolved problems in graph theory.
Given a graph $G$, the \emph{vertex deck} of $G$
	is the multiset of unlabeled graphs given by the 
	vertex-deleted subgraphs $\{ G - v : v \in V(G)\}$.
The \emph{edge deck} of $G$
	is the multiset of unlabeled graphs given by the 
	edge-deleted subgraphs $\{ G - e : e \in E(G)\}$.
A graph $G$ is \emph{reconstructible} if
	all graphs with the same vertex deck are isomorphic to $G$.
$G$ is \emph{edge reconstructible} if
	all graphs with the same edge deck are isomorphic to $G$.

\begin{conjecture}[The Reconstruction Conjecture]
	Every graph on at least three vertices is reconstructible.
\end{conjecture}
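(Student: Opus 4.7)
The Reconstruction Conjecture is of course the 1942 problem of Kelly and Ulam, so this ``proof proposal'' is better read as a plausible line of attack rather than a routine derivation. The plan would build on three classical ingredients: Kelly's Lemma, which shows that for any fixed graph $H$ with $n(H) < n(G)$ the number of copies of $H$ in $G$ is determined by the vertex deck; the reduction of Bondy and others showing that disconnected graphs, trees, and graphs with a cut vertex are already reconstructible, so that it suffices to verify the conjecture on 2-connected graphs; and the sizeable catalogue of classes for which reconstructibility is already known, including regular graphs and maximal planar graphs.

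Given these ingredients, the plan is to induct on the number of vertices $n$. The base would be supplied by the computational verification described in Section \ref{sec:reconstruction}, together with the earlier results of McKay, which between them settle all 2-connected graphs on at most $12$ vertices. For the inductive step, given a 2-connected graph $G$ on $n$ vertices, Kelly's Lemma delivers the count of every proper subgraph: the degree sequence, the number of ears of each order, the number of embeddings of any finite gadget one cares to design. I would hope to show that any graph $G'$ with the same vertex deck as $G$ must admit an ear decomposition combinatorially compatible with one for $G$, identify a canonical ear $\eps$ that is recoverable from the deck alone, and then reduce to the inductive hypothesis on $G-\eps$ and $G'-\eps$.

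The main obstacle is the well-known one. Kelly's counting argument extracts arbitrarily rich \emph{local} information, but degrades precisely when spanning structure is required: it is not known how to decide from the vertex deck whether $G$ is Hamiltonian, and by the same token there is no apparent route to identifying which ear is canonical in $G$ from deck invariants alone. A more honest form of the plan, therefore, is not to resolve the full conjecture in one stroke but to iterate the strategy of this paper: push the exhaustive computation to larger $n$, enlarge the list of provably reconstructible classes, and squeeze the space of hypothetical counterexamples until the remaining freedom becomes small enough to attack by a structural argument rather than by brute enumeration.
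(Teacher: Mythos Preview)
The statement is a \emph{conjecture}, and the paper offers no proof of it; it is quoted only as background for the computational work in Section~\ref{sec:reconstruction}. So there is nothing in the paper to compare your proposal against, and your proposal is, as you yourself say, a research programme rather than a proof.

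That said, two concrete gaps in the sketch are worth naming. First, your reduction step overstates what is known: it is \emph{not} established that all separable graphs are reconstructible. Yang's theorem (Theorem~\ref{thm:2connreconstruct}) is conditional---it says that reconstructibility of 2-connected graphs \emph{implies} the full conjecture---and the paper explicitly notes that separable graphs with vertices of degree one remain open. So ``it suffices to verify the conjecture on 2-connected graphs'' is true only in the conditional sense, not because the separable case is already done. Second, the computational base case you invoke is not quite the one supplied here: Section~\ref{sec:reconstruction} verifies \emph{edge} reconstructibility for 2-connected graphs on at most $12$ vertices, whereas McKay's earlier work handles \emph{vertex} reconstructibility only through $11$ vertices. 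Your induction is on the vertex deck, so the relevant base is $n\le 11$, not $12$.

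Beyond those corrections, the decisive gap is the one you already flag: there is no known mechanism for recovering a canonical ear---or any spanning-level structure---from the vertex deck, and Kelly's Lemma gives no purchase on this. Without that, the inductive step has no content, and the proposal remains a heuristic outline of where a proof might eventually live rather than a proof.
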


\begin{conjecture}[The Edge Reconstruction Conjecture]
	Every graph with at least four edges is edge reconstructible.
\end{conjecture}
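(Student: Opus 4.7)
The plan would be to attack this conjecture by combining the standard structural reductions with the ear-decomposition machinery developed in this paper, while being candid that the Edge Reconstruction Conjecture is an open problem of long standing and a complete proof likely requires a genuinely new idea. First I would invoke the classical reductions: it is known that disconnected graphs and graphs with a cut vertex are edge reconstructible (via Greenwell, Bondy, and others), so it suffices to treat 2-connected graphs. Second, I would apply M\"uller's counting bound, which shows that any graph $G$ with $e(G) > 1 + \log_2 n(G)!$ is edge reconstructible. This reduces the conjecture to 2-connected graphs $G$ satisfying $4 \le e(G) \le 1 + \log_2 n(G)!$, an extremely sparse regime.

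Within this sparse regime, my approach would be to exploit that every 2-connected graph has an ear decomposition, and that ear-deletions are naturally related to edge-deletions (trivial ears are exactly single-edge deletions that leave the result 2-connected). Given a hypothetical counterexample pair $G, H$ with identical edge decks, I would try to show that the family of cards $G - e$ for which $G - e$ remains 2-connected determines the canonical ear $\del(G)$ of Algorithm \ref{alg:CanonDelete}, and similarly for $H$. If the canonical ear can be read off from the edge deck in an invariant way, one obtains by induction on the number of ears that $G \cong H$, since peeling off a canonical ear gives smaller graphs with matching edge decks (after a bookkeeping argument tracking which cards lose a vertex versus merely an edge).

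The hard part, and the reason this problem has resisted attack since 1964, is that no known invariant extracted from the edge deck of a sparse 2-connected graph is strong enough to pin down its ear structure. Kelly--Lov\'asz style counting identities let one recover the number of copies of any proper subgraph with strictly fewer edges, but below M\"uller's threshold these counts leave many degrees of freedom; in particular they do not obviously determine which pairs of cards arise from edges lying on a common short ear. A realistic route would be to isolate a combinatorial invariant of edge decks that behaves additively under ear augmentation and test it on the datasets produced by the algorithm of Section \ref{sec:technique}; the computational enumeration of $\mathcal{G}_{N,E}$ already provides, for small $N$ and $E$ just above $N$, the complete list of sparse 2-connected graphs against which candidate invariants can be checked for completeness.

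The main obstacle is therefore not the reduction steps, which are classical, but finding an edge-deck invariant that distinguishes sparse 2-connected graphs beyond what Kelly--Lov\'asz counting already yields. Absent such an invariant, the most one can hope to deliver rigorously is the conjecture restricted to families where additional structure is available (for instance minimum degree at least $3$, or graphs of bounded bandwidth), together with computational verification via the framework of this paper that no counterexample exists at small orders.
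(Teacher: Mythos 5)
The statement you are asked to prove is the Edge Reconstruction Conjecture itself, which the paper states \emph{as a conjecture} and does not prove; it only verifies it computationally for 2-connected graphs on at most $12$ vertices (Theorem \ref{thm:2connreconst12}). Your proposal is candid that no complete proof is being offered, and that assessment is correct: what you have written is a research programme, not a proof, so there is by definition a gap --- the entire middle of the argument (an edge-deck invariant strong enough to determine the canonical ear of a sparse 2-connected graph) is exactly the missing idea, and you say so yourself.

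Beyond that structural point, one of the reduction steps you treat as classical is not actually available. You assert that ``graphs with a cut vertex are edge reconstructible,'' so that the problem reduces to 2-connected graphs. That reduction is not known. Yang's theorem (Theorem \ref{thm:2connreconstruct}) gives the reduction to 2-connected graphs only for the \emph{vertex} Reconstruction Conjecture, and the paper explicitly lists ``prove Yang's Theorem for the edge reconstruction problem'' as an open direction; it also notes that separable graphs with vertices of degree one have not been proven reconstructible. So even the restriction of your programme to 2-connected graphs is conditional, not classical. The M\"uller density bound you invoke is correctly stated (it is Theorem \ref{thm:reconstdensity} in the paper), and your observation that the remaining regime is very sparse matches the paper's definition of $\cR_N$; but the honest conclusion is that neither you nor the paper proves the conjecture, and your write-up should be framed as a discussion of partial reductions and obstacles rather than as a proof attempt with a fixable gap.
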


Bondy's survey~\cite{ReconstructorsManual} 
	discusses many classic results on this topic.
Greenwell~\cite{VertexImpliesEdge} showed that the vertex deck is reconstructible from
	the edge deck, so a reconstructible graph
	is also edge reconstructible.
Therefore, the Edge Reconstruction Conjecture is weaker than
	the Reconstruction Conjecture.

Yang~\cite{Reconstruct2Connected} showed that the Reconstruction Conjecture
	can be restricted to 2-connected graphs.

\begin{theorem}[Yang~\cite{Reconstruct2Connected}]
	\label{thm:2connreconstruct}
	If all 2-connected graphs are reconstructible,
		then all graphs are reconstructible.
\end{theorem}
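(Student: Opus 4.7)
The plan is to prove the forward direction by induction on $n = |V(G)|$, assuming throughout that every 2-connected graph on at least three vertices is reconstructible from its vertex deck. The base case $n = 3$ is handled by direct inspection, so fix $n \geq 4$ and assume every graph on fewer than $n$ vertices is reconstructible. Given a graph $G$ on $n$ vertices, I want to recover $G$ up to isomorphism from its deck, and I would split into three cases according to the connectivity type of $G$, which is itself a deck-recognizable property.

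If $G$ is disconnected, I would invoke Kelly's classical 1957 theorem that disconnected graphs are reconstructible from their vertex decks; this is independent of the 2-connected hypothesis. If $G$ is 2-connected, the hypothesis applies directly. The remaining case, in which $G$ is connected but has at least one cut vertex, is the substantive one.

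For the separable case, the strategy is to reconstruct $G$ via its block-cut tree. Using Kelly's Lemma, I would first argue that the multiset of isomorphism types of blocks of $G$, and the combinatorial shape of the block-cut tree, can be read off from the deck as counts of suitable subgraphs. Next, I would locate an \emph{endblock} $B$ of $G$, meaning a block containing exactly one cut vertex $c$. Deleting a non-cut vertex $x$ of $G$ lying in such an endblock yields a card $G - x$ from which the "rest" of $G$ outside $B$ is plainly visible, while $B$ itself has fewer than $n$ vertices and is either a bridge (trivially reconstructed) or 2-connected, in which case the hypothesis (or the inductive hypothesis) reconstructs it. Reassembling the blocks by identifying them along their shared cut vertices then recovers $G$.

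The main obstacle is the case where $G$ has several isomorphic endblocks attached at distinct cut vertices: here one must show that the deck determines not merely the multiset of endblock types, but also the exact pattern of attachment to the remainder of $G$. This is handled by Kelly-style counting of cards of each possible isomorphism type $G - x$ with $x$ an interior vertex of a specified endblock; because these counts are invariants of $G$ visible from the deck, they suffice to pin down the attachment pattern. Once this is resolved, gluing the reconstructed blocks at their cut vertices produces $G$, which closes the induction.
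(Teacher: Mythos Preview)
Your approach diverges sharply from Yang's. The paper summarizes Yang's actual argument: for a separable $G$, test whether the complement $\overline{G}$ is $2$-connected. If so, apply the hypothesis to $\overline{G}$ (the deck of $\overline{G}$ is the complemented deck of $G$), and then complement back. If $\overline{G}$ is also separable, Yang carries out a direct structural case analysis on $G$. The complement trick is the essential idea, because it is the only place the $2$-connected hypothesis enters.

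Your sketch, by contrast, never touches the complement; it tries to rebuild a separable $G$ from its block--cut tree. There is a genuine gap here. Notice that the $2$-connected hypothesis plays no real role in your outline: you say you will recover ``the multiset of isomorphism types of blocks of $G$'' from the deck via Kelly's Lemma, and once you have the blocks as explicit graphs there is nothing left to ``reconstruct'' about them --- the hypothesis that $2$-connected graphs are reconstructible from \emph{their own} decks is irrelevant, since you never form those decks. So if your argument worked, it would show unconditionally that every separable graph is reconstructible. But that is an open problem: as the paper notes, separable graphs with vertices of degree one are not known to be reconstructible.

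The specific leak is the attachment step. Knowing the block isomorphism types and the abstract shape of the block--cut tree does not, by any currently known Kelly-type count, determine \emph{which vertex of each block} sits at each cut-vertex position, nor how isomorphic endblocks are distributed among the available attachment points. Your proposed fix --- counting cards $G-x$ for $x$ interior to a specified endblock --- presupposes you can already tell, from a card alone, which endblock the deleted vertex came from and at which vertex of that block it was attached; that is circular. Yang's complement maneuver sidesteps this entirely, which is why the hypothesis on $2$-connected graphs genuinely does work there.
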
	
	
The proof 
	considers a separable graph $G$ and tests if the
	complement $\overline{G}$ is 2-connected.
If $\overline{G}$ is 2-connected, 
	$\overline{G}$ is reconstructible (by hypothesis)
	and since the vertex deck of $\overline{G}$
	is reconstructible from the vertex deck of $G$,
	$G$ is also reconstructible.
If $\overline{G}$ is not 2-connected, Yang
	reconstructs $G$ directly using
	a number of possible cases for the structure of $G$.
There has been work to make Yang's theorem unconditional
	by reconstructing separable graphs 
	such as trees~\cite{ManvelReconstructTrees},
	cacti~\cite{ReconstructCacti, ReconstructCactiRevisited},
	and separable graphs with no vertices of degree one~\cite{ManvelSeparable},
	but separable graphs with vertices of degree one have not been proven
	to be reconstructible.

Verifying the Reconstruction Conjecture requires that 
	every pair of non-isomorphic graphs have non-isomorphic
	decks.
Running a pair-wise comparison on every pair of isomorphism classes
	on $n$ vertices
	is quickly intractable.
McKay~\cite{McKaySmallReconstruction} 
	avoided this issue and verified the conjecture
	on graphs up to $11$ vertices
	by incorporating the vertex deck
	as part of the canonical deletion.
McKay used vertex augmentations to generate the graphs, 
	so a canonical deletion
	in this search
	is essentially selecting a canonical
	vertex-deleted subgraph.
His technique selects the deletion
	based only on the vertex deck,
	so two graphs with the same
	vertex deck would be immediate siblings
	in the search tree.
With this observation,
	only siblings require pairwise comparison, 
	making the verification a reasonable computation.
We use a modification of McKay's technique 
	within the context of 2-connected graphs
	to test the Edge Reconstruction Conjecture
	on small graphs.
This strategy was first proposed in 
	unpublished work of 
	Hartke, Kolb, Nishikawa, and Stolee~\cite{HKNS09}.

\subsection{The Search Space}

To search for pairs of non-isomorphic graphs with the same edge deck,
	we adapt McKay's sibling-comparison strategy
	as well as a density argument.
If a graph has sufficiently high density,
%	such as average degree $2\log n$,
	then the graph is edge reconstructible.

\begin{theorem}[Lov\'asz, M\"uller~\cite{LovaszDensity, MullerDensity}]
	\label{thm:reconstdensity}
	A graph on $N$ vertices and $E$ edges
		with either 
			$E > \frac{1}{2}{N\choose 2}$ or 
			$E > 1 + \log_2(N!)$
		is edge reconstructible.
\end{theorem}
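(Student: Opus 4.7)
The plan is to derive both density bounds from a single polynomial identity built on Kelly's Lemma for edge reconstruction. Kelly's Lemma says that for any graph $F$ on $N$ vertices with $e(F) < E$, the number $s(F, G)$ of spanning subgraphs of $G$ isomorphic to $F$ is determined by the edge deck of $G$, via the standard double count
$$\sum_{e \in E(G)} s(F, G - e) \;=\; (E - e(F)) \cdot s(F, G),$$
whose right-hand side is visible from the deck whenever $e(F) < E$.

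Suppose, toward a contradiction, that $G$ and $H$ on vertex set $V = \{1,\ldots,N\}$ share the same edge deck but are non-isomorphic. I would introduce the polynomial
$$\Sigma_x(G, H) \;=\; \sum_{\sigma \in S_N} x^{|E(G) \,\cap\, \sigma^{-1}(E(H))|},$$
expand $x^{|A|} = \sum_{T \subseteq A} (x-1)^{|T|}$, reverse the order of summation, and group labeled subsets $T \subseteq E(G)$ by the isomorphism class $C$ of the spanning graph $(V, T)$. This yields
$$\Sigma_x(G, H) \;=\; \sum_{C} s(C, G) \, s(C, H) \, |\mathrm{Aut}(C)| \, (x-1)^{e(C)},$$
a formula symmetric in $G$ and $H$. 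Kelly's Lemma together with the equal edge decks gives $s(C, G) = s(C, H)$ for every $C$ with $e(C) < E$, so subtracting kills all subleading terms in $\Sigma_x(G, G) - \Sigma_x(G, H)$. At the top degree $e(C) = E$, the only surviving contribution is $C = [G]$, where $s([G], G) = 1$ while $s([G], H) = 0$ because $G \not\cong H$. The net result is the master identity
$$\Sigma_x(G, G) - \Sigma_x(G, H) \;=\; |\mathrm{Aut}(G)| \, (x-1)^E.$$

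Both density bounds now follow by specialization. For $x = 0$ (Lov\'asz's case), $\Sigma_0(G, H)$ counts permutations $\sigma$ with $\sigma(E(G)) \subseteq E(\overline{H})$; since $|\sigma(E(G))| = E > \binom{N}{2} - E = |E(\overline{H})|$ when $E > \tfrac{1}{2}\binom{N}{2}$, no such $\sigma$ exists, and similarly $\Sigma_0(G, G) = 0$. The master identity then forces $|\mathrm{Aut}(G)| = 0$, a contradiction. For $x = -1$ (M\"uller's case), every summand has absolute value $1$, so $|\Sigma_{-1}(G, G)|, |\Sigma_{-1}(G, H)| \leq N!$; the master identity gives $|\mathrm{Aut}(G)| \cdot 2^E \leq 2 N!$, and combining with $|\mathrm{Aut}(G)| \geq 1$ yields $2^{E-1} \leq N!$, which contradicts $E > 1 + \log_2(N!)$.

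The main obstacle is the polynomial bookkeeping: verifying that in the expansion of $\Sigma_x(G, H)$, only the isomorphism class $C = [G]$ contributes at the top degree, and that this contribution vanishes in $\Sigma_x(G, H)$ precisely because $G \not\cong H$. Once the master identity is established, both density bounds drop out immediately by elementary estimates on signed permutation sums at $x = 0$ and $x = -1$.
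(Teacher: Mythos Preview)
The paper does not supply its own proof of this theorem; it is quoted as a known result of Lov\'asz and M\"uller and used as a black box to bound the search space $\cR_N$. So there is nothing in the paper to compare your argument against.

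That said, your proof is correct and is precisely the standard argument (the polynomial formulation is often credited to Nash-Williams). The derivation of the expansion $\Sigma_x(G,H) = \sum_C s(C,G)\,s(C,H)\,|\mathrm{Aut}(C)|\,(x-1)^{e(C)}$ is right, the use of Kelly's Lemma to kill the terms with $e(C)<E$ is right, and the top-degree bookkeeping is right: the only class $C$ with $e(C)=E$ and $s(C,G)\neq 0$ is $[G]$ itself, and $s([G],H)=0$ exactly because $G\not\cong H$, so the master identity $\Sigma_x(G,G)-\Sigma_x(G,H)=|\mathrm{Aut}(G)|\,(x-1)^E$ follows. Both specializations are handled cleanly; in the $x=-1$ case your inequality $|\mathrm{Aut}(G)|\cdot 2^E \le 2\,N!$ indeed contradicts $E>1+\log_2(N!)$ since $|\mathrm{Aut}(G)|\ge 1$.

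One small remark: you implicitly use that $G$ and $H$ have the same number $E$ of edges, which is immediate since the edge deck has exactly $E$ cards. You might also note for completeness that classes $C$ with $e(C)>E$ contribute nothing to either sum, so the top degree really is $E$.
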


Note that for all $N \geq 11$, $1 + \log_2(N!) < \frac{1}{2}{N\choose 2}$.
	
\def\cR{\mathcal{R}}
\begin{definition}
	Let $\cR_N$ be the class of 2-connected graphs $G$ with at most $N$ vertices and
		at most $1+\log_2(N!)$ edges.
\end{definition}

Note that this definition of $\cR_N$ bounds the number of edges as a function
	 of $N$ which is independent of the number of vertices of a specific graph.

\begin{corollary}
	For $N \geq 11$,
		all 2-connected graphs $G$ with at most $N$ vertices and $G \notin \cR_{N}$
		are edge reconstructible.
\end{corollary}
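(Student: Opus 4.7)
The plan is to apply the Lov\'asz--M\"uller theorem (Theorem~\ref{thm:reconstdensity}) directly to $G$, with only a short bookkeeping step to translate the ambient bound defining $\cR_N$ into a bound on $G$'s own parameters. First I would observe that $G$ is 2-connected with $n(G) \leq N$ by hypothesis, so the only remaining way for $G$ to fail to lie in $\cR_N$ is to violate the edge bound; that is, $e(G) > 1 + \log_2(N!)$.

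Next, since $n(G) \leq N$, the monotonicity of $\log_2$ and of the factorial gives $\log_2(n(G)!) \leq \log_2(N!)$. Chaining this with the previous inequality yields $e(G) > 1 + \log_2(n(G)!)$, which is precisely the hypothesis of Theorem~\ref{thm:reconstdensity} when its parameters are taken to be $N \leftarrow n(G)$ and $E \leftarrow e(G)$. That theorem then concludes immediately that $G$ is edge reconstructible, completing the proof.

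The assumption $N \geq 11$ plays no substantive role in this argument; it only ensures that the $\log$-factorial bound is the effective one in the definition of $\cR_N$, since for $N \geq 11$ we have $1 + \log_2(N!) < \tfrac{1}{2}\binom{N}{2}$ and so a graph excluded by the density clause of Theorem~\ref{thm:reconstdensity} is already excluded by the $\log$ clause. There is no real obstacle here: the corollary is a one-line consequence of Theorem~\ref{thm:reconstdensity} combined with the monotonicity observation above.
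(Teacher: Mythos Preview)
Your proof is correct and matches the paper's intent: the corollary is stated without proof there, as an immediate consequence of Theorem~\ref{thm:reconstdensity} together with the definition of $\cR_N$ and the monotonicity of $n \mapsto 1+\log_2(n!)$. Your observation that the hypothesis $N \geq 11$ is inessential to the argument is also correct; it reflects the paper's remark that this is the range where the $\log$-factorial bound becomes the tighter of the two in Theorem~\ref{thm:reconstdensity}.
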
	

We shall use $\cR_N$ as our search space.
It is deletion-closed, since removing an ear will always	decrease the number of edges.
	
Within the context of the ear-augmentation generation algorithm,
	we generate 2-connected graphs.
When trivial ears are added, these are the same as edge-augmentations.
We will show that if a non-trivial ear is added, then 
	the resulting graph is edge reconstructible 
	and its edge deck does not need to be compared to other edge decks.
Hence, an edge deck must be
	compared only when the final augmentation that 
	generated the graph is an edge augmentation,
	where the canonical deletion can be selected
	using the edge deck.
	
We begin by discussing graphs which are known to be reconstructible 
	or edge reconstructible.

\begin{proposition}
	\label{prop:detectablyreconstructable}
	A 2-connected graph  $G$ is edge reconstructible
		if any of the following hold:
	\begin{cem}
		\item There is an ear with at least two internal vertices.
		\item There is a branch vertex $v$ which is 
				incident to only non-trivial ears.
% COULD NOT PROVE THIS CASE...
%		\item Every branch vertex is incident
%				to at least one non-trivial ear.
		\item $G$ is regular.
	\end{cem}
\end{proposition}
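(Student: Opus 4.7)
I would treat the three conditions separately. A preliminary fact I would use for (1) and (2) is that the degree sequence of $G$ is recoverable from the edge deck: by Greenwell's theorem (cited above) the edge deck determines the vertex deck, and then Kelly's identity $d_G(v) = |E(G)| - |E(G - v)|$ reads each degree off the vertex cards.

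For (3) I would dispose of regular graphs quickly by invoking the classical theorem that a regular graph is reconstructible, and combining this with Greenwell's theorem to pass from reconstructibility to edge reconstructibility.

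For (1) and (2), the strategy is the same: exhibit a specific edge $e$ of $G$ whose card $G - e$ is structurally rigid, and then argue that any graph $H$ with the same edge deck as $G$, which must contain a copy of this card in its own deck, admits only one way to ``re-add'' a single edge so as to match the degree sequence of $G$, and that this forces $H \cong G$. In case (1), I would take $e = x_i x_{i+1}$ to be an interior edge of the long ear with both endpoints of degree $2$ in $G$; then $G - e$ acquires two new degree-$1$ vertices sitting at the ends of maximal degree-$2$ paths, and the only way to restore the degree sequence is to join those two tips. In case (2), I would take $e = vw$ with $w$ a degree-$2$ neighbor of the branch vertex $v$; the card $G-e$ has $w$ of degree $1$ and $v$ of degree $d_G(v) - 1 \geq 2$, and the same rigidity argument should force the reattachment back to $vw$.

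The main obstacle is pinning down uniqueness of the reattachment: one has to rule out alternative placements of the missing edge in $H = (G - e) + f$ that could also yield a graph with the correct degree sequence and edge deck. This rigidity should follow from $2$-connectedness together with the specific shape of the maximal degree-$2$ pendant paths visible in $G - e$, but making this precise, especially in (1) where a long ear provides many candidate edges to remove and many pairs of pendant-path tips that could conceivably be joined, is the delicate part of the argument.
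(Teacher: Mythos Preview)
Your plan for (1) is exactly what the paper does, and your worry there is misplaced: once you fix the card $G-e$ with $e$ interior to a long ear, that card has \emph{exactly two} vertices of degree~$1$ (since $G$ has minimum degree~$2$), and a single edge addition restoring minimum degree~$2$ must join precisely those two vertices. There is no ambiguity about ``many pairs of pendant-path tips''; this is the easy case, not the delicate one.

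The genuine gap is in (2). Deleting $e=vw$ leaves a unique degree-$1$ vertex $w$, but the other endpoint of the restored edge must be some $u$ with $\deg_{G-e}(u)=d-1$, and nothing in the degree sequence singles out $v$ among such vertices; your appeal to ``pendant-path shape'' does not help, because the pendant path from $w$ runs along the ear \emph{away} from $v$, not toward it. The paper sidesteps this entirely by passing (via Greenwell) to the \emph{vertex} deck and looking at the card $G-v$: since every neighbour of $v$ has degree~$2$ in $G$, the card $G-v$ has exactly $d$ vertices of degree~$1$, and any $H$ sharing the deck must reattach a degree-$d$ vertex to all of them, forcing $H\cong G$. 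So the fix is to delete the vertex, not an edge.

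For (3) your route through ``regular graphs are reconstructible'' plus Greenwell is valid but heavier than needed. The paper simply observes that in any card $G-e$ of a $d$-regular graph the two endpoints of $e$ are the only vertices of degree $d-1$, so $G$ is recovered from a single card.
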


\begin{proof}
	\noindent{\bf(1)} By reconstructing the degree sequence, we
		recognize that all vertices have degree at least two.
		Since there is an ear with at least two internal vertices,
			there is an edge internal to that ear with endpoints of degree two.
		In that edge-deleted card, there are exactly two vertices
			of degree one, which must be connected by the missing edge,
			giving $G$.
			
	\noindent{\bf (2)} Let $d$ be the degree of $v$. 
			By reconstructing the vertex deck, 
			we can recognize that the card for $G-v$ 
			is missing a vertex of degree $d$ and
			that there are $d$ vertices of 
			degree one in $G-v$.
		Attaching $v$ to these vertices reconstructs $G$.
%%% COULD NOT PROVE THIS CASE
%	\noindent{\bf (3)} Suppose every branch vertex is incident to at least
%		one non-trivial ear. 
%		If in addition every branch vertex has degree at least $d \geq 4$, 
%			then there are no vertices of degree $d-1$.
%		But, there is an edge-deleted subgraph with exactly one vertex of degree $d-1$
%			and exactly one vertex of degree one, given by deleting the incident edge
%			from a non-trivial ear incident to a branch vertex of degree $d$.
%		
%\noindent\fbox{
%\parbox{0.9\textwidth}{
%	Now, suppose that there exists a branch vertex of degree three.
%		
%}}

	\noindent{\bf (3)} For a $d$-regular graph $G$, 
		every edge-deleted subgraph $G-e$ has
		exactly two vertices of degree $d-1$
		corresponding to the endpoints of $e$. 
\end{proof}

%\begin{definition}
	Graphs satisfying any of the conditions of 
		Proposition \ref{prop:detectablyreconstructable}
		are called \emph{detectably edge reconstructible} graphs.
%\end{definition}

%%%%%%%% CANONICAL DELETION
\subsection{Canonical deletion in $\cR_N$}

In this section, we describe a method for selecting a canonical ear
	to delete from a graph in $\cR_N$.

%The key insight of McKay's search~\cite{McKaySmallReconstruction} 
%	for counterexamples to 
%	the Reconstruction Conjecture for graphs of order $n$
%	is to avoid
%	pairwise tests of all graphs on that order.
%Instead, McKay noticed that 
%	a pair of graphs with the same deck would
%	also have the same potential deletions,
%	when using a vertex augmentation procedure.
%The canonical deletion selection used was then 
%	computed by selecting a canonical vertex-deleted subgraph
%	\emph{using only the deck}.
%This guaranteed that any two non-isomorphic graphs with the same 
%	deck must have the same canonical deletion.
%Finally, to test the conjecture, 
%	graphs were checked pairwise only from the 
%	set of graphs with the same canonical parent.
%Additional reconstructible properties, 
%	such as the degree sequence, were
%	used to limit a full isomorphism comparison between decks.

If we are able to determine that $G$
	is edge reconstructible,
	then the canonical deletion does not need to be generated from the 
	edge deck.
In such a case, 
	we default to the canonical deletion algorithm Delete$_{\cF}(G)$,
	where the canonical labeling of $G$ gives the lex-first 
	ear $\eps$ of minimum length so that $G-\eps$ 2-connected.

If $G$ is not detectably edge reconstructible,
	then all ears of $G$ have at most one internal vertex,
	and every branch vertex is incident to at least one trivial ear.
These properties allow us to find either
	a trivial ear 
	or an ear of order one 
	whose deletion remains 2-connected.
Compute the minimum $r$
	so that there exists an ear $\eps$ in $G$ of order $r$
	so that $G-\eps$ is 2-connected.
We prefer to select a trivial ear  
	when available.
	
%	%%%%%%%%%%%%%%%%%%%%%%%%%%%%%%%%%%%%%%
%	%%%% FIX THIS WITH THE NEW KNOWLEDGE %
%	%%%%%%%%%%%%%%%%%%%%%%%%%%%%%%%%%%%%%%
%\begin{algorithm}[t]
%	\caption{\label{alg:DeleteReconstruct}Delete$_{\cR}(G)$ --- Canonical Deletion for $\cR_{N}$}
%	\begin{algorithmic}
%		\IF{$|V(G)| < N$ {\bf or} $G$ is detectably edge reconstructible}
%			\RETURN Delete$_{\mathcal{C}}(G)$.
%		\ENDIF
%		\STATE order $\leftarrow$ minDeletableOrder($G$)
%		\STATE $d_1,d_2 \leftarrow$ minMultiplicityDegrees($G,r$)
%			% the branch-vertex degree pair %
%			%	of minimum multiplicity over deletable ears of order $r$.
%		\STATE minString $\leftarrow \emptyset$
%		\STATE minEar $\leftarrow \emptyset$
%		\FOR{{\bf all} ears $e$ in $G$ of order $r$ %
%				{\bf with} $\deg(u) = d_1$ {\bf and} $\deg(v) = d_2$ }
%			\IF{$r=0$}
%				\STATE canonString $\leftarrow$ the canonical string of $G-e$
%			\ELSE
%				\STATE $a,b,c \leftarrow$ vertices of $e$.
%				\STATE canonString1 $\leftarrow$ the canonical string of $G-ab$
%				\STATE canonString2 $\leftarrow$ the canonical string of $G-bc$
%				\IF{canonString1 $\leq$ canonString2}
%					\STATE canonString $\leftarrow$ canonString1 $+$ canonString2
%				\ELSE
%					\STATE canonString $\leftarrow$ canonString2 $+$ canonString1
%				\ENDIF
%			\ENDIF
%			\IF{ minString $= \emptyset$ {\bf or} canonString $<$ minString}
%				\STATE minString $\leftarrow$ canonString
%				\STATE minEar $\leftarrow e$
%			\ENDIF
%		\ENDFOR
%		\RETURN minEar
%	\end{algorithmic}
%\end{algorithm}

Out of the choices of possible order-$r$ ear deletions,
	count the multiplicities for the degree set of the ear endpoints.
Find the pair $\{d_1,d_2\}$ of endpoint degrees
	which has minimum multiplicity over all deletable ears of order $r$ in $G$
	breaking ties by using the lexicographic order.
Out of the deletable ears of order $r$ and endpoint degrees $\{d_1,d_2\}$,
	we must select a canonical ear using the edge deck.
If $r = 0$, any trivial deletable ear $\eps$ corresponds 
	to the edge-deleted subgraph
	$G-\eps$.
By computing the canonical labels of these cards and selecting 
	the lexicographically-least canonical string,
	we can select a canonical edge.
If $r = 1$, there are two edges in the ear that can be deleted to form
	edge-deleted subgraphs with a single vertex of degree 1 connected
	to a 2-connected graph.
We compute the canonical labels of both cards, 
	select  
	the lexicographically-least canonical string,
	then find the lex-least string of those strings.
	
Due to the nature of the reconstruction problem,
	this canonical deletion procedure is not perfect.
There are graphs $G$ containing trivial ears $\eps_1, \eps_2$
	whose deletions $G-\eps_1$ and $G-\eps_2$ are isomorphic, 
	but $\eps_1$ and $\eps_2$ are not in orbit within $G$.
If the edge-deleted subgraph $G-\eps_1$ is selected
	as the canonical edge card,
	the deletion algorithm
	must accept both $\eps_1$ and $\eps_2$
	as canonical deletions.
This leads to a duplication of $G$ in the search tree,
	but only in the limited case of a graph $G$
	which is not detectably edge reconstructible
	\emph{and} such ambiguity appears.
A similar concern occurs for the vertex-deletion case, 
	but is not explained in~\cite{McKaySmallReconstruction}.

To compare graphs with the same canonical deletion,
	we use three comparisons.
The first compares the degree sequences.
The second compares a custom reconstructible invariant\footnote{
	This invariant is not theoretically interesting,
		but is available
		in the source code. See 
		the \texttt{GraphData::computeInvariant()} method.
	},
	which is based on the degree sequence
	of the neighborhood of each vertex.
The third and final check compares the 
	sorted list of canonical strings for
	the edge-deleted subgraphs.
During the search,
	there was no pair of graphs which satisfied all
	three of these checks.

\subsection{Results}

With the canonical deletion Delete$_{\cR}(G)$, 
	$\cR_N$
	was generated and checked for collisions
	in the edge decks of graphs which are not 
	detectably reconstructible.
Table \ref{tab:ReconstructionTime} describes the computation time
	for $N \in \{8,\dots,12\}$.

\begin{table}[t]
	\centering
	\begin{tabular}[h]{c|r|r|r|r|r|r@{ }r@{ }r@{ }r@{ }r@{.}l}
		\multicolumn{1}{c|}{$N$} 
			&\multicolumn{1}{c|}{$g(N)$}
			 %& \multicolumn{1}{c|}{$C_N$} 
			 & \multicolumn{1}{c|}{$|\cR_N|$} 
			 & \multicolumn{1}{c|}{Diff 1}
			 & \multicolumn{1}{c|}{Diff 2}
			 & \multicolumn{1}{c|}{Diff 3}
			 & \multicolumn{6}{c}{CPU time}\\
		\hline 
%		  5 &  7 %&           10 
%		  	&           6 & 0 & 0 & 0 & &&&& 0&001s \\
%		  6 & 10 %&           56 
%		  	&          39 & 0 & 0 & 0 & &&&& 0&011s \\
%		  7 & 13 %&          468 
%		  	&         332 & 0 & 0 & 2 & &&&& 0&10s \\
		  8 & 16 %&         7123 
		  	&        4804  
			& 145 & 177 & 187 & &&&& 8&01s \\
		  9 & 19 %&       194066 
		  	&      111255 
		  	& $6.19 \times 10^3$ & $5.72 \times 10^3$ & $4.77 \times 10^3$
			 & &  &  &  5m & 33&85s \\
		 10 & 22 %&      9743542
		 	&     3051859 
		 	& $7.13 \times 10^5$ & $6.00 \times 10^5$ & $4.21 \times 10^5$ 
			 & &  &  6h & 33m & 40&59s \\
		 11 & 26 %&    900969091 
		 	&   308400777 
		 	& $9.44 \times 10^7$ & $7.28 \times 10^7$ & $3.83 \times 10^7$ 
			& &  32d & 20h & 38m & 08&16s \\
		 12 & 29 %& 153620333545	
		 	& 25615152888
		 	& $12.00 \times 10^{9}$ & $9.60 \times 10^9$ & $4.47 \times 10^9$ 
			& 10y & 362d & 13h & 05m & 39&13s \\
	\end{tabular}
	\caption{\label{tab:ReconstructionTime}Comparing $|\cR_N|$ and %
		the time to check $\cR_{N}$. Here, $g(N) = 1+\lfloor\log_2(N!)\rfloor$.}
\end{table}

With this computation, we have the following theorem.

\begin{theorem}
	\label{thm:2connreconst12}
	All 2-connected graphs on at most $12$ vertices 
		are edge reconstructible.
\end{theorem}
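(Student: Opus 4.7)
The plan is to reduce the conjecture on at most $12$ vertices to a finite computation on a sparse family, and then to exploit the canonical augmentation framework of Section \ref{sec:technique} so that this computation becomes feasible. First I would dispose of dense graphs using Theorem \ref{thm:reconstdensity}: any graph on $N \leq 12$ vertices with more than $1+\log_2(N!)$ edges is already edge reconstructible, so the only 2-connected graphs on at most $12$ vertices that could possibly fail the conjecture lie in $\cR_{12}$. By Theorem \ref{thm:2connreconstruct}-style reasoning one could in principle pass to the full Reconstruction Conjecture on separable graphs as well, but for the theorem as stated we need only show that no two non-isomorphic members of $\cR_{12}$ share an edge deck.

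Next I would enumerate $\cR_{12}$ via isomorph-free generation using ear augmentations, applying Algorithm \ref{alg:FullSearch} with the deletion rule $\del_{\cR}$ described above. The key structural idea, inherited from McKay's small-graph reconstruction work, is that $\del_{\cR}$ chooses the canonical ear to remove using data that is itself readable from the edge deck (minimum order of deletable ear, minimum-multiplicity endpoint-degree pair, and finally a canonical label of one of the resulting edge-deleted cards). Consequently, if two graphs $G_1, G_2 \in \cR_{12}$ share an edge deck then they share a canonical edge card, and therefore they appear as \emph{siblings} of a common parent node in the search tree. This collapses the potentially quadratic pairwise comparison across $|\cR_{12}|$ into comparisons restricted to sibling sets at each node, which is what makes the computation tractable.

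I would then further prune the comparison set using Proposition \ref{prop:detectablyreconstructable}: any $G$ with a long ear, a branch vertex incident only to non-trivial ears, or a regular degree sequence is already edge reconstructible, so such $G$ can be marked and never compared. For any remaining siblings I would apply, in order, three edge-deck invariants: the degree sequence, the custom neighborhood-based invariant from the code, and finally the sorted list of canonical labels of edge-deleted subgraphs. A match in all three would constitute a witness to failure of the conjecture, while the absence of any match across all sibling sets constitutes a proof. Running this to completion for $N \in \{8, 9, 10, 11, 12\}$ yields the timings of Table \ref{tab:ReconstructionTime} and, since no pair of graphs passes all three checks, establishes the theorem.

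The main obstacle is not mathematical but computational and bookkeeping-related: one must verify that $\del_{\cR}$ is genuinely well-defined on graphs that are not detectably edge reconstructible (handling the ambiguity noted above, where two trivial ears $\eps_1,\eps_2$ not in orbit nevertheless yield isomorphic deletions, so both are accepted as canonical and $G$ is visited twice), and one must ensure that in every such duplication both copies are compared against the right sibling set so that no potential collision is missed. Once this is handled carefully, the remaining work is running the enumeration on the grid, which the paper reports took roughly eleven CPU-years and produced no collisions.
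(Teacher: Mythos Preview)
Your proposal is correct and follows essentially the same approach as the paper: reduce to $\cR_{12}$ via the Lov\'asz--M\"uller density bound, enumerate $\cR_{12}$ by ear augmentation with the edge-deck-based canonical deletion $\del_{\cR}$ so that any two graphs sharing an edge deck appear as siblings, discard detectably edge reconstructible graphs via Proposition~\ref{prop:detectablyreconstructable}, and compare the remaining siblings with the three-stage invariant check. You have also correctly flagged the duplication subtlety when two non-orbit-equivalent ears give isomorphic deletions, which the paper handles in the same way.
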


This computation extends the previous result that 
	all graphs of order at most $11$ are vertex reconstructible~\cite{McKaySmallReconstruction}.
To remove the 2-connected condition of Theorem \ref{thm:2connreconst12},
	there are three possible methods.
First, prove Yang's Theorem (Theorem \ref{thm:2connreconstruct})
	%on 2-connected vertex reconstructibility 
	for the edge reconstruction problem.
Second, Yang's Theorem could be 
	made unconditional by 
	proving that separable graphs 
	are reconstructible or edge reconstructible.
Third, a second stage of search could be designed to combine a 
	list of two-connected graphs to form sparse separable graphs
	and test edge reconstruction on those cases.

%%%%%%%%%%%%%%%%%%%%%%%%%%%%%%%%%%%%%%%%%%%% ARBITRARY CLEARPAGE
%%%%%%%%%%%%%%%%%%%%%%%%%%%%%%%%%%%%%%%%%%%% ARBITRARY CLEARPAGE
%%%%%%%%%%%%%%%%%%%%%%%%%%%%%%%%%%%%%%%%%%%% ARBITRARY CLEARPAGE
%%%%%%%%%%%%%%%%%%%%%%%%%%%%%%%%%%%%%%%%%%%% ARBITRARY CLEARPAGE
%%%%%%%%%%%%%%%%%%%%%%%%%%%%%%%%%%%%%%%%%%%% ARBITRARY CLEARPAGE
%%%%%%%%%%%%%%%%%%%%%%%%%%%%%%%%%%%%%%%%%%%% ARBITRARY CLEARPAGE
%%%%%%%%%%%%%%%%%%%%%%%%%%%%%%%%%%%%%%%%%%%% ARBITRARY CLEARPAGE
%%%%%%%%%%%%%%%%%%%%%%%%%%%%%%%%%%%%%%%%%%%% ARBITRARY CLEARPAGE
%%%%%%%%%%%%%%%%%%%%%%%%%%%%%%%%%%%%%%%%%%%% ARBITRARY CLEARPAGE
\arbitrarypagebreak
\section{Conclusion}

Generating 2-connected graphs by ear augmentations
	and removing isomorphs by canonical ear deletion
	is an effective and general technique.
The computation times show that the technique
 	is more effective in the case of ear-monotone properties 
	such as uniquely $K_4$-saturated graphs
	or when the structure of the ear decomposition
	is essential to the problem at hand,
	such as verifying the edge reconstruction conjecture on small graphs.

A forthcoming work~\cite{pExtremal} applies the generation technique
	to search for dense graphs with a fixed number of perfect matchings
	(see~\cite{DudekSchmitt} and~\cite{HSWY} for background).
Previous work~\cite{HSWY} 
	classified the infinite family of graphs
	into a particular combination of finite pieces,
	which can be found through our generation process.
This results in exact structure theorems
	for a larger class of parameters, where the exact structure
	is computationally generated.
Our implementation is general enough 
	to allow for such extensions to generate
	other families of 2-connected graphs,
	and is concurrent to allow for large computations to be 
	run quickly in real time.

\section*{Acknowledgements}

The author thanks 
	Stephen G. Hartke,
	Hannah Kolb, 
	Jared Nishikawa, 
	Kathryn T. Stolee,
	and Paul Wenger
	for interesting discussions 
	concerning the problems addressed in this work
	and for improving the quality of this work.
Special thanks to Joshua Cooper for Figure \ref{subfig:uk4-10}.

This work was completed utilizing the Holland Computing Center of the University of Nebraska.
Thanks to the Holland Computing Center faculty and staff
	including Brian Bockelman, Derek Weitzel, and David Swanson.
Thanks to the Open Science Grid for access to significant computer resources.
The Open Science Grid is supported by the National Science Foundation 
	and the U.S. Department of Energy's Office of Science.

%=============================================================================================================================================================================== 

\bibliographystyle{abbrv}
\bibliography{IsomorphFreeGeneration}

\end{document}